\documentclass[12pt]{amsart}
\usepackage[dvipdfmx]{graphicx, color}
\usepackage[all]{xy}
\usepackage{amssymb}
\usepackage{mathtools}
\usepackage{setspace}
\usepackage{amsfonts}
\usepackage{float}
\usepackage{amsmath}
\usepackage{amsthm}
\usepackage{tikz}

\usepackage{url}
\usepackage{bm}
\numberwithin{equation}{section}

\usepackage[super]{nth}

\calclayout

\usepackage{colortbl}
\usepackage{color}

\def\qed{\hfill $\Box$}

\newcommand{\N}{\mathbb{N}}
\newcommand{\Q}{\mathbb{Q}}
\newcommand{\R}{\mathbb{R}}

\newcommand\jeden {1\hskip-3.5pt1}

\def\qed{\hfill $\Box$}

\usepackage{xcolor}

\def\b0{\mbox{\boldmath $0$}}

\def\I{{\rm I}}
\def\II{{\rm I\hspace{-.1em}I}}

\newtheorem{thm}{\bf Theorem}[section]

\newtheorem{lem}[thm]{\bf Lemma}
\newtheorem{prop}[thm]{\bf Proposition}
\newtheorem{dfn}[thm]{\bf Definition}

\newtheorem{rem}[thm]{\bf Remark}

\begin{document}

\setlength{\baselineskip}{15pt}

\title[Fractal Betti numbers]
{An extension of fractal Euler number via persistent homology}
\author[K.~Nishijima]{Kosuke Nishijima}
\address[K.~Nishijima]{
Department of Pure and Applied Mathematics, Graduate School of Fundamental Science and Engineering, Waseda University, Tokyo 169-8555, Japan}
\email{k\_nishijima@akane.waseda.jp}
\keywords{Fractal, Entropy, Euler characteristics, Betti number, Persistent homology}
\dedicatory{}

\begin{abstract} 
In the context of geometric measure theory, Llorente-Winter introduced the (average) fractal Euler number as a notion of the Euler characteristic for fractals embedded in Euclidean space. However, the class of fractals to which it is applicable remains very limited. In the present paper, we modify this notion by  applying perspectives of persistent homology and partly the theory of magnitude, which have recently come from applied topology and category theory. We then demonstrate concrete calculation of our {\em average ph-fractal Euler number} for some classically well-known fractals, especially  the Cantor dust and Menger sponge which are excluded from Llorente-Winter's approach. 
\keywords{First keyword \and Second keyword \and More}
\end{abstract}
\maketitle

\section{Introduction}
To extract qualitative information about the shape of a compact subset $X$ of Euclidean space $\R^d$, 
we would first look at its dimension and the number of connected components, or more generally, the number of  `holes'. As a typically nice case, if $X$ has a retraction to a finite cell complex, the Euler characteristics is defined as the alternative sum 
$$\chi(X)=\sum_{i \ge 0} (-1)^i \, b_i(X)$$
where $b_i(X)$ is the $i$-th Betti number, that is $\dim H_i(X; \R)$, intuitively understood as the number of `$i$-dimensional holes' inside $X$. 
However, if $X$ is a much more complicated space, like a fractal, it often happens that the dimension becomes a non-integer real number (in the sense of various fractal dimensions) and some Betti numbers do not exist. 
In fact, a fractal $X$ often admits {\em self-similarity}: 
there exists a finite number of affine contraction self-maps $f_1, \cdots, f_n: X \to X$ such that 
$$X=\bigcup_{k=1}^n f_k(X).$$
If there is a non-trivial $i$-dimensional cycle $\gamma$ of $X$, $f_k(\gamma)$ may not be trivial and infinitely many non-trivial cycles may possibly be created by iteration of the maps, then $b_i(X)=+\infty$. 

As a preceding approach in geometric measure theory, Llorente-Winter \cite{LW07} (also Winter and Z\"{a}hle \cite{W06, WZ13}) introduced a notion of Euler characteristics for such fractals, under a strong assumption. 
Let $X$ be a compact subset of $\R^d$. 
Their first step is to fatten the space -- consider closed $\epsilon$-neighborhoods of $X$ (also called $\epsilon$-parallel sets) 
$$X_\epsilon:= \{\; x \in \R^d \; | \; \inf_{a \in X} d(x, a) \le \epsilon\; \}$$
where $d(\cdot, \cdot)$ means the Euclidean metric. 
Now we assume that $\chi(X_\epsilon)$ exists for any $\epsilon > 0$ and there is the infimum of $s\ge 0$ satisfying that $\epsilon^s |\chi(X_\epsilon)|$ is bounded as $\epsilon \searrow 0$, called the {\em Euler exponent of $X$}, denoted by $\sigma=\sigma(X)$. 
Under this condition, they introduce the {\em fractal Euler number} 
\begin{equation}\label{fE}
\chi_{f}(X):=\lim_{\epsilon \searrow 0}  \left(\frac{\epsilon}{d_\infty}\right)^\sigma \chi(X_\epsilon)
\end{equation}
and the {\em average fractal Euler number} 
\begin{equation}\label{avfE}
\chi_{f}^{a}(X):=\lim_{\delta \searrow 0} \frac{1}{|\log \delta|} \int_\delta^1 \left(\frac{\epsilon}{d_\infty}\right)^\sigma \chi(X_\epsilon) \frac{d\epsilon}{\epsilon}
\end{equation}
where $d_\infty$ means the diameter of $X$. Namely,  $\sigma$ is the growth rate of $\chi(X_\delta)$ as $\delta \searrow 0$ and 
$\chi_{f}(X)$ is the proportional constant to $(\delta/d_\infty)^{- \sigma}$ in asymptotic expansion; if  the quantity exists, then $\chi_{f}^{a}(X)$ automatically converges to the same value. However, this is quite rare, that is discussed in detail in \cite{LW07}. 
Notice that those two notions of `Euler characteristics' represent a {\em relative metric feature of the embedding of $X$ in Euclidean space $\R^d$}; they are obviously invariant under Euclidean motions, but not topological nor fractal intrinsic feature as a metric space $X$ itself, and even not invariant under affine transformations of $\R^d$. 

In the present paper, we focus on the latter averaged quantity, which measures the limit behavior of proper Riemannian integral in the right hand side of (\ref{avfE}) as $\delta \searrow 0$. The existence of $\chi_{f}^{a}(X)$ is still a subtle issue  \cite{LW07}. 
Moreover, suppose that $X$ is self-similar and there is a non-zero `bad radius' $\epsilon>0$ so that $|\chi(X_\epsilon)|=+\infty$. Indeed, 
those are often satisfied even by classically well-known fractals. Then there are infinite many such bad radii and the integral in (\ref{avfE}) does no longer make sense for any $\sigma$. 
To overcome this trouble, our basic idea is to modify the meaning of the integral, inspired by new concepts from applied topology and category theory.

To make it precise, we employ the persistent homology $PH_*(X_\bullet)$  of the filtered family $X_\bullet=\{X_\epsilon\}$ indexed by  
$\{\, 0\le \epsilon \le d_\infty \, \mid \,  \mbox{$b_i(X_\epsilon)$ exists for $i=0,1,\dots, d$}\, \}$.  The $i$-th Betti function $b_i(X_\epsilon)$ is approximated by a finite step function summing up the characteristic functions over barcodes, that is $\sum_{|e|\ge \delta} \jeden_\epsilon$, where the sum runs over all barcodes $e \in PH_i(X_\bullet)$ with the lifetime $|e|=d_e-b_e \ge \delta$ ($b_e$ and $d_e$ are the birth and death time of $e$). Note that, by definition,  $\jeden_e(\epsilon)=1$ if $\epsilon \in e$, $0$ otherwise, so $\int_0^{d_\infty} \epsilon^{\sigma-1} \jeden_e d\epsilon
=\frac{1}{\sigma}\left(d_e^{\sigma}-b_e^{\sigma}\right)$ (for any $\sigma>0$). 
In case that  there is some non zero bad radius,  the Euler exponent $\sigma$ does not exist, so we instead adopt a new kind of fractal dimension, the {\em $i$-th PH-complexity} $\sigma_i=\sigma_i(X)$, defined by MacPherson-Schweinhart \cite{MS12} and Schweinhart \cite{S21}; roughly, $\sigma_i$ is the growth rate of the number of barcodes in $PH_i(X_\bullet)$ with $|e| \ge \delta$ as $\delta \searrow 0$. 
Then we replace the integral in the right hand side of (\ref{avfE}) by the finite sum 
\begin{equation}\label{S}
S_\delta (X):=
\sum_{i=0}^d \frac{(-1)^i}{\sigma_i} 
\sum_{|e|\ge \delta} \; \left(\left(\frac{d_e}{d_\infty}\right)^{\sigma_i}-\left(\frac{b_e}{d_\infty}\right)^{\sigma_i}\right)
\end{equation}
and define a new quantity, the {\em average ph-fractal Euler number}, by 
\begin{equation}\label{phfS}
\chi_{a}^{\rm phf}(X):= \lim_{\delta \searrow 0} \frac{1}{|\log \delta|} S_\delta(X). 
\end{equation}
Notice that the role of the parameter $\delta$ has been changed to control the lifetime, not just the end point of the region of integration. 
We also emphasize that this sum (\ref{S}) can be read off as (an alternative sum of) the {\em magnitude of persistent modules} in the sense of Govc-Hepworth \cite{GH21} after taking a suitable change of variables. See \S 2 and \S 3 for our precise definition and detail description. 

Apart from the technical matter of using persistent homology, our Euler number $\chi_{a}^{\rm phf}(X)$ essentially retains the same meaning as Llorente-Winter's $\chi_{f}^{a}(X)$,  but it is applicable to a wider class of fractals.  
In fact, we show that $\chi_{a}^{\rm phf}(X)$ coincides with $\chi_{f}^{a}(X)$ under suitable conditions (Proposition \ref{prp: the relation between phf Euler number and average fractal Euler number}).   
Furthermore, we demonstrate several precise computation of our Euler number $\chi_{a}^{\rm phf}(X)$ on classically famous fractals, where 
all arguments in computation are elementary. 
In \S 4, we treat the 3-adic Cantor set on the line and  the Sierpi\'nski carpet in the plane, which are also found in Llerente-Winter \cite{LW07}, and in \S 5, we take the Cantor dust on the plane and the Menger sponge in $3$-space (Figure 1), which are excluded from Llerente-Winter's approach because there are non-zero bad radii in each case. 
In the latter two examples, we observe a new phenomenon, called {\em little barcode dust}, which seemingly has not been recognized so far, and we accurately compute the contribution to our Euler number. 

Finally in \S 6, we give some conclusion together with supplementary notes. 
In particular,  instead of considering the parallel sets $X_\epsilon$, 
it would be very interesting to take finite samples ${\bf x} \subset X$ and the persistent homology $PH_*({\bf x}_\bullet)$ of  \^{C}ech complexes. In fact, Schweinhart \cite{S21} has introduced the {\em $PH_i$-dimension} through finite sampling as a different fractal dimension from the $i$-th PH-complexity. 
It is possible to improve the definition of our {\em average ph-fractal Betti numbers} by taking finite samples and the suprimum, that would be expected to relate to magnitude homology. We will discuss it elsewhere.

This paper is part of the author's master thesis \cite{Nishijima}. 
The author expresses his sincere gratitude to his advisor Prof. Toru Ohmoto, and also Dr. Chong Zheng, for introducing this subject  and engaging in numerous discussions.

 \begin{figure}[h]
 \centering
\includegraphics[height=6cm, pagebox=cropbox]{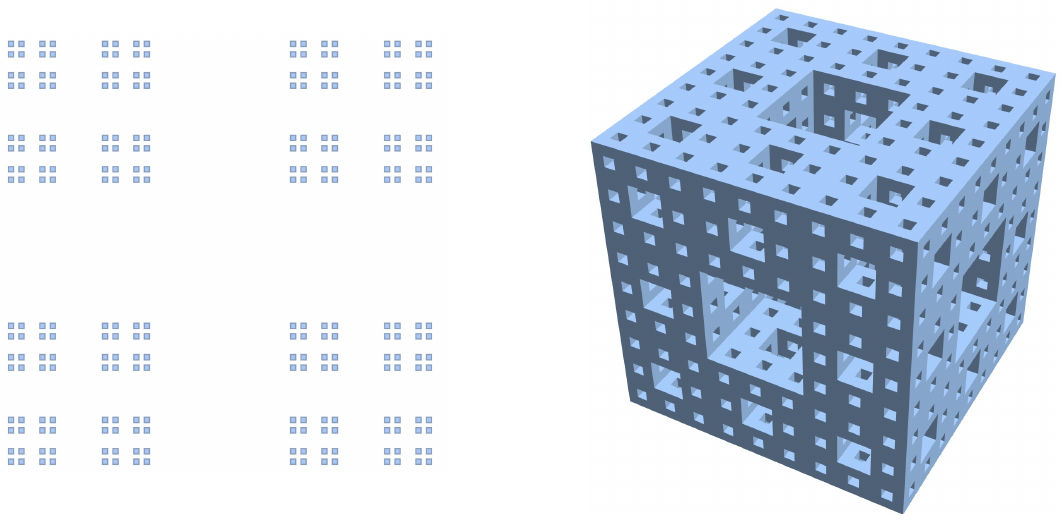}\\
\caption{Cantor dust and Menger sponge: their Euler numbers are computed as 
$\chi_{a}^{\rm phf}(C\times C)=0.1018\dots$ and $\chi_{a}^{\rm phf}(M)= -0.0001353\dots$, while $\chi_{f}^{a}$ is not defined for them.}
\label{CM}
 \end{figure}

\section{Persistent homology and PH-complexity}
\subsection{Persistent homology}
For simplicity, we first introduce the persistent homology for a filtration of topological spaces: 
\begin{equation*}
    X_\bullet \colon X_1 \subset X_2 \subset \dots .
\end{equation*} 
\indent
Let $H_i (X_j)$ denote the $i$-th homology group of $X_j$ with coefficients in a field $\mathbf{k}$ and set
\begin{equation*}
    PH_i (X_\bullet) \coloneqq \bigoplus_{j\in \N} H_i(X_j)
\end{equation*}
equipped with the action $x\cdot (c_1, c_2, \dots) = (0, c_1, c_2, \dots)$. 
Then, $PH_i(X_\bullet)$ naturally becomes a $\mathbf{k} [x] $-module. 
If it is finitely generated, then a generater is called a barcode, denoted it by $[b,d]$ where $b$ is the birth time and $d$ is the death time (or $\infty$).

\indent
This is more generally stated as follows. 
\begin{dfn}
    Let $R$ be a totally orderd set and  $\left\{V_r\right\}_{r\in R}$ be a family of vector space indexed by $R$. 
    Moreover, for $s,r\in R$ with $r\leq s$, there is a linear map $\rho_{s,r}\colon V_r \rightarrow V_s$. 
    Then, \textbf{persistent module} $\mathbf{V}$ on $R$ is a pair $\left(\left\{V_r\right\}_{r\in R}, \left\{\rho_{s,r}\right\}_{r,s\in R} \right)$ satisfying $2$ conditions:
    \begin{enumerate}
        \item $\rho_{r,r} = id_{V_r}$
        \item For $r,s,t \in R$ with $r\leq s \leq t$, $\rho_{t,r} = \rho_{t,s} \circ \rho_{s,r}$
    \end{enumerate}
\end{dfn}
In categorical terms, a persistent module is a functor from $R$, viewed as a category via its natural ordering, to the category of vector spaces.
Moreover, William \cite{W15} established the barcode decomposition theorem for a persistent module. 
A persistent module is {\em pointwise finite-dimensional} if for any $V_r$ is finite dimensional. 
Assume that $R$ a subset of the real line $\R$. 
For an interval $I \subset R$, the {\em interval module} $\mathbf{V} = \mathbf{k}_I$ is given by $V_t = \mathbf{k}$ for $t \in I$, $V_t = 0$ for $t \notin I$ and $\rho_{t,s} = id_{\mathbf{k}}$ for $s,t \in I$ with $s\leq t$. 
The interval $I$ is often called a barcode. 

\begin{thm}{\rm (\cite{W15})}
    Any pointwise finite-dimensional persistence module is a direct sum of interval modules.
\end{thm}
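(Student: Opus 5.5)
We will follow the strategy of Crawley-Boevey's proof of the decomposition theorem (the result cited as \cite{W15}). Let $\mathbf{V}=(\{V_r\},\{\rho_{s,r}\})$ be pointwise finite-dimensional over a totally ordered $R$. The idea is to produce, for every ``interval type'', a subquotient that detects how many summands of that type occur, and then to assemble interval submodules realizing them.

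\textbf{Step 1: cuts and subspaces.} For $t\in R$ we define the images and kernels
\[
\mathrm{Im}^-_t=\bigcup_{s<t}\mathrm{im}\,\rho_{t,s},\qquad
\mathrm{Im}^+_t=\bigcap_{s\le t}\mathrm{im}\,\rho_{t,s},
\]
\[
\mathrm{Ker}^-_t=\bigcup_{u\ge t}\ker\rho_{u,t},\qquad
\mathrm{Ker}^+_t=\bigcap_{u>t}\ker\rho_{u,t}.
\]
Because $V_t$ is finite-dimensional, each union or intersection is realized by a single index; this is exactly where pointwise finiteness enters. More generally, for each cut (lower set) $c$ of $R$ we get subspaces $\mathrm{Im}^{\pm}_{c,t}$ and $\mathrm{Ker}^{\pm}_{c,t}$. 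For an interval $I$ with endpoint cuts $c^-,c^+$ and $t\in I$, we set
\[
V_{I,t}^{\pm}=\mathrm{Im}_{c^-,t}^{\pm}\cap \mathrm{Ker}_{c^+,t}^{\pm}+ (\text{lower terms}).
\]
Following the functorial filtration method (Gelfand–Ponomarev, Ringel), we take $V_{I,t}^+=(\mathrm{Im}^+\cap\mathrm{Ker}^+)$ and $V_{I,t}^-=\mathrm{Im}^+\cap\mathrm{Ker}^-+\mathrm{Im}^-\cap\mathrm{Ker}^+$. We then show that the maps $\rho_{s,t}$ induce isomorphisms $V^+_{I,t}/V^-_{I,t}\cong V^+_{I,s}/V^-_{I,s}$ for $t\le s$ in $I$. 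This is a linear-algebra check on images and kernels.

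\textbf{Step 2: sections and disjointness.} We apply the abstract ``sections'' lemma. Let $U$ be a finite-dimensional space and $\{(F^-_\lambda,F^+_\lambda)\}$ a family of pairs $F^-_\lambda\subset F^+_\lambda$ indexed by a totally ordered set, which is disjoint and covering in Crawley-Boevey's sense. The lemma says we can choose complements $W_\lambda$ with $F^+_\lambda=W_\lambda\oplus F^-_\lambda$, and then $U=\bigoplus_\lambda W_\lambda$. We verify that for fixed $t$, the pairs $(V^-_{I,t},V^+_{I,t})$ over all intervals $I\ni t$ form such a family. Disjointness and covering follow because the image filtration and the kernel filtration are each totally ordered chains of subspaces. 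Compatibility in $t$ is the delicate part: we choose a complement $W_{I,t_0}$ at one point $t_0\in I$ and transport it, setting $W_{I,t}=\rho_{t,t_0}(W_{I,t_0})$ for $t\ge t_0$. For $t<t_0$ we instead lift via the image condition, using that elements of $\mathrm{Im}^+$ have preimages. This yields a submodule $\mathbf{W}_I\cong \mathbf{k}_I^{\,m_I}$ with $m_I=\dim V^+_{I,t}/V^-_{I,t}$.

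\textbf{Step 3: conclusion and main obstacle.} Pointwise, $V_t=\bigoplus_{I\ni t}W_{I,t}$ by Step 2, so $\mathbf{V}=\bigoplus_I\mathbf{W}_I$ is a direct sum of interval modules. We expect the main obstacle to be making the choice of complements coherent across all $t\in I$ simultaneously when $R$ is uncountable, since the choices cannot be made independently at each point. We will handle it by the transport construction in Step 2, which requires checking that $\rho_{s,t}$ restricted to $V^+_{I,t}$ is injective modulo $V^-$ and surjective onto $V^+_{I,s}$ modulo $V^-$. For the paper's purposes, with $R$ a subset of $\R$ and the filtration $X_\bullet$ of parallel sets, we may simply cite \cite{W15} for this technical verification.
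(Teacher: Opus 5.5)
The paper gives no proof of this statement; it only cites Crawley-Boevey \cite{W15}, and your outline follows his functorial-filtration argument. As written, though, the one non-formal step is not actually carried out.

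\textbf{The main gap: coherent lifting below $t_0$.} In Step 2 you handle $t<t_0$ by ``lifting via the image condition''. That gives, at each $t$ separately, some subspace of $V^+_{I,t}$ mapping onto $W_{I,t_0}$. But such a lift is determined only up to $\ker\rho_{t_0,t}\cap V^+_{I,t}$, which lies in $V^-_{I,t}$ but need not vanish. Nothing forces $\rho_{t',t}(W_{I,t})=W_{I,t'}$ for $t<t'<t_0$, so $\mathbf{W}_I$ need not be a submodule.

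For example, take $\mathbf{k}_{(0,2]}\oplus\mathbf{k}_{(0,1]}$, $I=(0,2]$ and $t_0=3/2$. For $t<1$ the admissible lifts are the lines $\langle e+af\rangle$ with $a$ arbitrary, and choosing $a$ independently at two points gives incompatible subspaces.

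The conditions you propose to check (injectivity and surjectivity of $\rho$ modulo $V^-$) do not repair this. By themselves they do not even guarantee that $W_{I,t_0}$ lies in the image of $\rho_{t_0,t}$. Two things are needed.
\begin{enumerate}
\item Genuine surjectivity $\rho_{t,s}(V^+_{I,s})=V^+_{I,t}$ for $s\le t$ in $I$. This holds because, by finite-dimensionality, $\mathrm{Im}^+$ at $s$ equals $\mathrm{im}\,\rho_{s,r}$ for all sufficiently small $r\in I$. Hence $\rho_{t,s}$ carries $\mathrm{Im}^+$ at $s$ onto $\mathrm{Im}^+$ at $t$, while $\rho_{t,s}^{-1}$ of $\mathrm{Ker}^+$ at $t$ is $\mathrm{Ker}^+$ at $s$.
\item A coherent choice of lifts. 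If $I$ has a least element, take $t_0$ to be it; then only forward transport is needed. Otherwise choose a strictly decreasing coinitial sequence $t_0>t_1>\cdots$ in $I$, which exists since $R\subseteq\R$. Inductively choose $W_{I,t_{n+1}}\subseteq V^+_{I,t_{n+1}}$ mapping isomorphically onto $W_{I,t_n}$. Such a subspace is automatically a complement of $V^-_{I,t_{n+1}}$, by the isomorphism on quotients. Then set $W_{I,t}=\rho_{t,t_n}(W_{I,t_n})$ for any $t_n\le t$; this is independent of $n$ by construction.
\end{enumerate}
For an arbitrary totally ordered $R$ without countable coinitiality, a transfinite or inverse-limit argument would be needed instead.

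\textbf{A secondary error: the family $(V^-_{I,t},V^+_{I,t})$ is not disjoint.} Step 2 claims the family $\{(V^-_{I,t},V^+_{I,t})\}_{I\ni t}$ is disjoint in Crawley-Boevey's sense; in general it is not. For $\mathbf{k}_{[0,3]}\oplus\mathbf{k}_{[1,2]}$ at $t=3/2$, the two sections are $(0,\langle e_1\rangle)$ and $(0,\langle e_2\rangle)$, and neither upper space lies in the other's lower space.

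The sections lemma must instead be applied to the combined family $\bigl(F^-_\ell+F^+_\ell\cap G^-_u,\;F^-_\ell+F^+_\ell\cap G^+_u\bigr)$. Here $F$ are the image sections and $G$ the kernel sections; the combined family is disjoint and covering because each of $F$ and $G$ is. You then need the modular-law observation that any complement of $V^-_{I,t}$ inside $V^+_{I,t}$ is also a complement for the corresponding combined section.

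With these two repairs your outline becomes Crawley-Boevey's proof.
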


Let $X$ be a compact subset of a metric space and $X_\epsilon$ denotes of closed $\epsilon$-neighborhood of $X$ as mentioned in Introduction. 
Then, we define the $i$-th \textbf{bad radius set} of $X$ $\mathfrak{B}_i (X)$ by
\begin{equation*}
    \mathfrak{B}_i (X) \coloneqq \left\{ \epsilon \in \mathbb{R}_{\geq 0}\colon \dim H_i (X_\epsilon) = \infty \right\}.
\end{equation*} 
Put $\mathfrak{B}(X) = \bigcup \mathfrak{B}_i (X)$. 
For example, in case of the Cantor dust $X = C \times C$, the $\epsilon$-neighborhood filtration is as illustrated in Figure \ref{fig: filtration of Cantor dust}. 
\begin{figure}[bt]
    \centering
    \includegraphics[width=\textwidth]{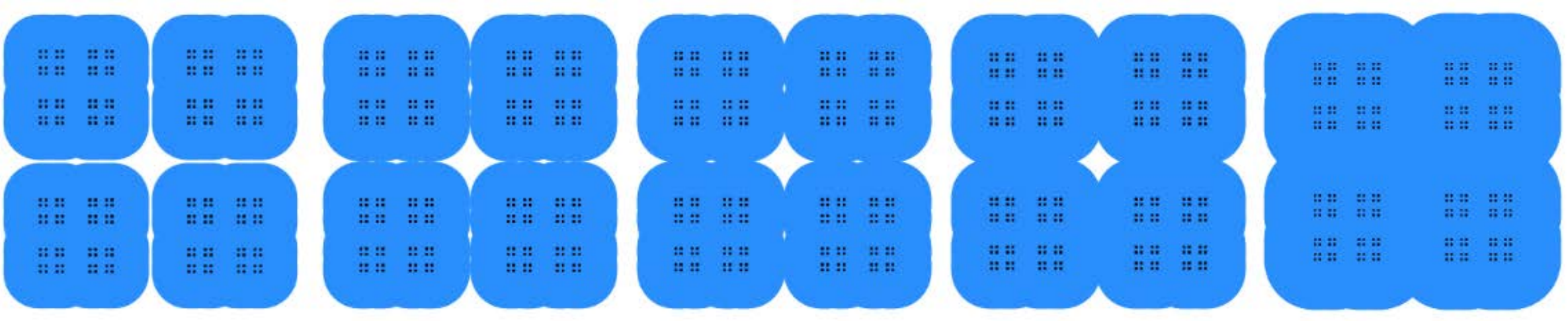}
    \caption{The $\epsilon$-filtration of the Cantor dust in $\R^2$ around $\epsilon = \frac{1}{6}$ (center of the figure)}
    \label{fig: filtration of Cantor dust}
\end{figure}
Figure \ref{fig: little barcode dust} is a magnified view of a part of $X_\epsilon$ for $\epsilon = \frac{1}{6}$, that shows that $\dim H_1(X_{\frac{1}{6}}) =\infty$. 
In the meaning of persistent homology, the infinitely many very small holes can be interpreted as generators with extremely short lifetimes. 
We refer to this phenomenon as \textbf{little barcode dust}. 
\begin{figure}[bt]
    \centering
    \includegraphics[width=\textwidth]{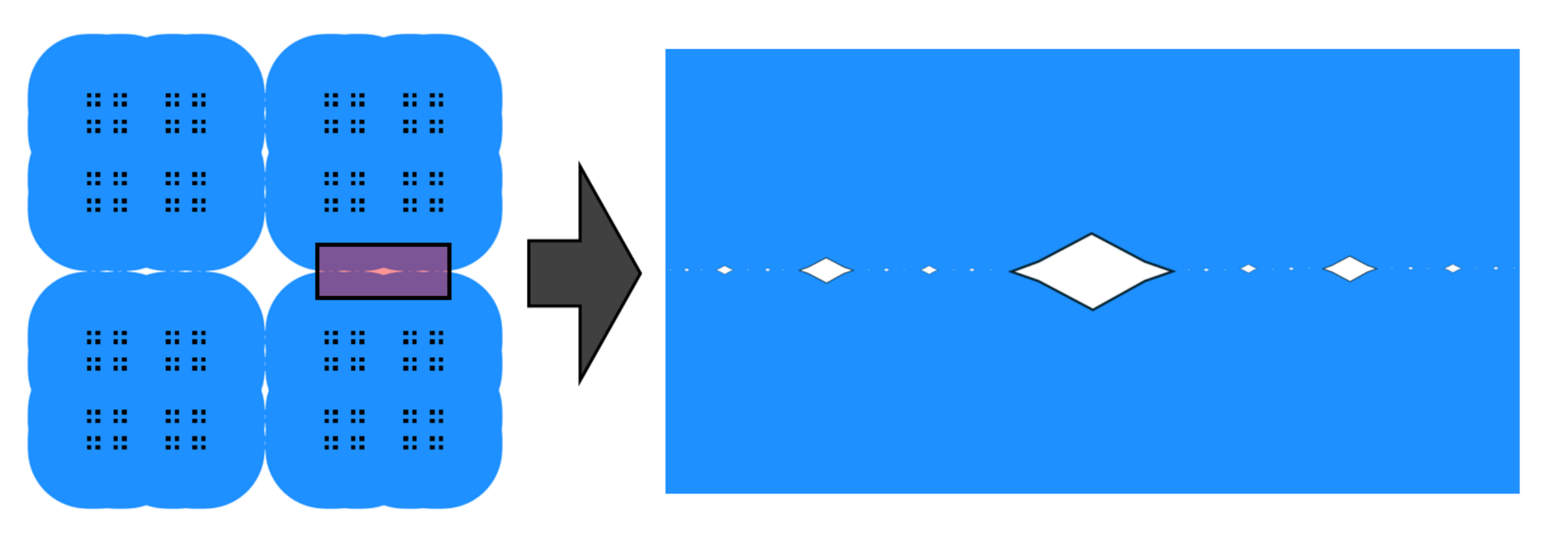}
    \caption{The little barcode dust of Cantor dust in $\R^2$ with $\epsilon = \frac{1}{6}$}
    \label{fig: little barcode dust}
\end{figure}

\indent
As the $i$-th persistent homology of $X$, we consider the $i$-th persistent homology defined with respect to the filtration 
\begin{equation*}
    X_\bullet = \left\{X_\epsilon\right\}_{\epsilon\in R}
\end{equation*}
indexed by the set $R = \mathbb{R}_{\geq 0} - \mathfrak{B}(X)$. 
Then $PH_i (X_\bullet)$ is pointwise finite-dimensional, thus $PH_i (X_\bullet)$ can be decomposed into a direct sum of barcodes. 

\subsection{PH-complexity}
To characterize the structure of fractals through the growth rate of the number of persistent homology barcodes, the following quantity was defined by MacPherson-Schweinhart \cite{MS12}. 
Here, we adopt the definition given by Schweinhart \cite{S21}.\\
\indent
Let $b(e)$ and $d(e)$ be the birth time and death time of a barcode $e\in PH_i(X_\bullet)$, respectively. 
Then, for $\epsilon > 0$, we define the function representing the number of barcodes $e$ with lifetime $|e| = d(e) - b(e)$ greater than $\epsilon$ as
\begin{equation*}
    I_{i,\epsilon}(X) = \# \left\{ \, e \in PH_i(X_\bullet) \colon |e| > \epsilon \, \right\}.
\end{equation*}
Then, MacPherson-Schweinhart\cite{MS12} defined the PH-complexity as the growth rate of the number of barcodes as the lifetime decreases, as follows:
\begin{dfn}
    The $i$-th PH-complexity $\mathrm{comp}_{PH_i}(X)$ of a compact subset $X$ of a metric space is defined as
    \begin{equation*}
        \mathrm{comp}_{PH_i} (X) = \sup \left\{\, c \, \colon \lim_{\epsilon \searrow 0} \epsilon^c I_{i,\epsilon} (X) = \infty \, \right\}.
    \end{equation*}
    \label{dfn:ph-complexity}
\end{dfn}
This perspective of focusing on the lifetime of the barcodes of $PH_i (X_\bullet)$, rather than observing $b_i (X_\epsilon)$, is the key to modify the average fractal Euler number \cite{LW07}. 
Furthermore, Schweinhart gives an equivalent definition of PH-complexity in \cite{S21}.
\begin{prop}
    \label{prp: equivalent definition of PH-complexity}
    (\cite{S21}, Appendix)
    Let $X$ be a compact subset of a metric space. Then
    \begin{equation*}
        comp_{PH_i} (X) = \inf \left\{ \, \alpha \, \colon \sum_{e\in PH_i (X_\bullet)} |e|^\alpha < \infty \, \right\}
    \end{equation*}
\end{prop}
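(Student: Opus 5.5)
The plan is to pass through an intermediate quantity, the upper growth exponent of the counting function,
\[
\ell := \limsup_{\epsilon \searrow 0} \frac{\log I_{i,\epsilon}(X)}{|\log \epsilon|},
\]
and to show separately that $\mathrm{comp}_{PH_i}(X) = \ell$ and that $\inf\{\alpha : \sum_e |e|^\alpha < \infty\} = \ell$. Two standing facts are needed throughout. First, $I_{i,\epsilon}(X) < \infty$ for every $\epsilon > 0$. Second, all finite lifetimes are bounded by some constant depending on $\mathrm{diam}(X)$, because $X_\epsilon$ is convex (hence contractible) once $\epsilon$ is large. The single essential infinite bar in $PH_0$ is discarded, as is implicit in the statement.

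\textbf{Converting the sum into an integral.} By the layer-cake formula applied to the counting measure on barcodes,
\[
\sum_{e} |e|^\alpha = \int_0^{\infty} \alpha\, t^{\alpha-1} I_{i,t}(X)\, dt ,
\]
and the integrand vanishes for $t$ beyond the lifetime bound. So only the behaviour as $t \searrow 0$ matters.

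\textbf{The summability exponent equals $\ell$.}
\begin{itemize}
\item If $\alpha > \ell$, choose $\eta$ with $\ell + \eta < \alpha$. For small $t$ we have $I_{i,t} \le t^{-(\ell+\eta)}$, so the integrand is $O(t^{\alpha - \ell - \eta - 1})$. This is integrable near $0$, and the sum is finite.
\item If $\alpha < \ell$, choose $\eta > 0$ with $\alpha + \eta < \ell$ and a sequence $t_n \searrow 0$ with $I_{i,t_n} \ge t_n^{-(\alpha+\eta)}$. Every barcode counted in $I_{i,t_n}$ contributes more than $t_n^\alpha$ to the sum. Hence $\sum_e |e|^\alpha \ge t_n^{\alpha} I_{i,t_n} \ge t_n^{-\eta} \to \infty$.
\end{itemize}
The infimum is therefore exactly $\ell$.

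\textbf{Identifying $\mathrm{comp}_{PH_i}(X)$ with $\ell$.} The set $A = \{c : \epsilon^c I_{i,\epsilon} \to \infty\}$ is downward closed, since decreasing $c$ only multiplies by $\epsilon^{c'-c} \to \infty$.
\begin{itemize}
\item If $c > \ell$, then $\epsilon^c I_{i,\epsilon} \le \epsilon^{c-\ell-\eta} \to 0$ for small $\eta > 0$, so $c \notin A$. This gives $\sup A \le \ell$.
\item For the reverse inequality we need, for $c < \ell$, that $\epsilon^c I_{i,\epsilon} \to \infty$ along all $\epsilon \searrow 0$, not merely along a subsequence.
\end{itemize}

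\textbf{The main obstacle} is this last point: $\epsilon^c I_{i,\epsilon} \to \infty$ involves a full limit, while a $\limsup$ only gives a subsequence. I would first try to use that $I_{i,\epsilon}$ is monotone in $\epsilon$, and compare the lower and upper exponents $\liminf$ and $\limsup$ of $\log I_{i,\epsilon}/|\log\epsilon|$.

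If these two exponents can genuinely differ, I would instead read the definition, as Schweinhart does, with the limit interpreted as a $\limsup$. That is,
\[
\mathrm{comp}_{PH_i}(X) = \sup\Bigl\{c : \limsup_{\epsilon \searrow 0} \epsilon^c I_{i,\epsilon}(X) = \infty\Bigr\}.
\]
Under this reading the argument above goes through verbatim: for $c < \ell$, the subsequence $t_n$ gives $t_n^c I_{i,t_n} \ge t_n^{-\eta} \to \infty$. This yields $\mathrm{comp}_{PH_i}(X) = \ell$, which combined with the summability result proves the proposition.
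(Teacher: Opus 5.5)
\textbf{The statement as printed is false, so the second half of your plan cannot be completed.} The paper gives no argument of its own here; it only points to Schweinhart's appendix. So the real test is the statement itself. Your first half is sound: the layer-cake identity and your two bullets correctly show that $\inf\{\alpha:\sum_e|e|^\alpha<\infty\}$ is the \emph{upper} exponent $\ell$.

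The problem is the definition of $\mathrm{comp}_{PH_i}$ as printed, which uses a genuine $\lim$. Run your two estimates with $\liminf$ in place of $\limsup$. They show that $\mathrm{comp}_{PH_i}(X)$ equals the \emph{lower} exponent $\underline{\ell}=\liminf_{\epsilon\searrow0}\log I_{i,\epsilon}(X)/|\log\epsilon|$:
for $c<\underline{\ell}$ one has $\epsilon^cI_{i,\epsilon}\ge\epsilon^{-\eta}$ for all small $\epsilon$, and for $c>\underline{\ell}$ some $\epsilon_n\searrow0$ give $\epsilon_n^cI_{i,\epsilon_n}\to0$. Monotonicity of $I_{i,\epsilon}$ does not force $\underline{\ell}=\ell$, just as it does not for box-counting functions. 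The two exponents really do differ for some compact sets.

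Here is an example. Take $t_k=2^{-4^k}$, $M_0=0$, $M_k=\lfloor t_k^{-3/4}\rfloor$. Let $X=\{0\}\cup\{x_n\}\subset\R$ be a sequence decreasing to $0$ whose consecutive gaps consist of exactly $M_k-M_{k-1}$ gaps of length $t_k$ for each $k\ge1$; the total length is at most $\sum_kt_k^{1/4}<\infty$.
\begin{itemize}
\item $PH_0(X_\bullet)$ has one finite bar of lifetime $g/2$ per gap $g$, all born at $0$. Hence $I_{0,\epsilon}(X)=M_k$ whenever $t_{k+1}\le2\epsilon<t_k$.
\item Along $2\epsilon=t_{k+1}=t_k^4$ this gives $\epsilon^{1/2}I_{0,\epsilon}\le t_{k+1}^{1/2-3/16}\to0$. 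So the literal complexity is at most $1/2$; in fact it is $3/16$.
\item On the other hand, $\sum_e|e|^{1/2}=\sum_k(M_k-M_{k-1})(t_k/2)^{1/2}$ diverges, since its terms grow like $t_k^{-1/4}$. The summability exponent is $3/4$.
\end{itemize}
So the proposition with the definition as printed is false.

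\textbf{Your fallback is the necessary correction, not an optional reading.} The equality holds exactly when $\mathrm{comp}_{PH_i}$ is defined with $\limsup$, equivalently as $\ell$. For that version your argument is a complete proof. You should say so outright and drop the monotonicity idea. In the self-similar examples treated later in the paper $\underline{\ell}=\ell$, so nothing downstream is affected.

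\textbf{Two smaller repairs.}
\begin{itemize}
\item $X_\epsilon$ need not be convex for any $\epsilon$: two points in $\R^2$ give a union of two discs. What you need is that for $\epsilon\ge\mathrm{diam}\,X$ it is star-shaped about every $p\in X$. Indeed, for $p,x\in X$ and $|y-x|\le\epsilon$ one has $|(1-s)p+sy-x|\le(1-s)|p-x|+s|y-x|\le\epsilon$, so $X_\epsilon$ is contractible.
\item Both the layer-cake identity and the step ``each counted bar contributes more than $t_n^\alpha$'' require $\alpha>0$. The infimum should therefore be taken over $\alpha\ge0$. With only finitely many finite bars, the unrestricted infimum is $-\infty$ while the complexity is $0$.
\end{itemize}
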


As a preceding work, Venessa Robins considered the growth rate of Betti number $b_i(X_\epsilon)$ as $\epsilon \searrow 0$ in her thesis \cite{Robins}, while PH-complexity is defined as the growth rate of the number of the persistent homology barcodes. 
Note that PH-complexity can be defined even for the case of $\mathfrak{B}_i(X) \neq \{0\}$.
In particular, when $\mathfrak{B}_i(X)=\{0\}$ and $b(e)=0$ for any $e \in PH_i(X_\bullet)$, Robins' growth rates coincide with the PH-complexity. 
After MacPherson-Schweinhart \cite{MS12}, Schweinhart \cite{S21} intoroduces another fractal dimension, to which we will refer in the last section.

\section{Average $ph$-fractal Betti number}
We modify the concept of the average fractal Euler number introduced by Llerente-Winter\cite{LW07} to handle a broader class of objects, particularly those for which the bad radius set $\mathfrak{B}(X)$ is an at most countable set. 
In doing so, we decompose the Euler number in Equation \ref{avfE} into Betti numbers. 
Let $X$ be a compact subset of the $d$-dimensional Euclidean space. 
Let $PH_i(X_\bullet)$ denote the $i$-th persistent homology of $X_\bullet$ with coefficients in a field $\mathbf{k}$, $d_\infty$ the diameter of $X$ and $\sigma_i = \mathrm{comp}_{PH_i}(X)$.
We consider the integral of $\epsilon^{\sigma_i -1} b_i(X_\epsilon) = \sum_{e} \epsilon^{\sigma_i -1} \jeden_e$ with $|e|>\delta$. 
That is written as follows: 
\begin{dfn}
    For $i$ such that $\sigma_i \neq 0$, we define
    \begin{equation*}
        S_\delta^{(i)}(X) = \frac{1}{\sigma_i} \sum_{\substack{e \in PH_i(X_\bullet) \\ |e| >\delta}} \left\{ \left( \frac{d(e)}{d_\infty} \right)^{\sigma_i} - \left( \frac{b(e)}{d_\infty}\right)^{\sigma_i} \right\}.
    \end{equation*}
    For $\sigma_i = 0$, we set $S_\delta^{(i)}(X) = 0$.  
\end{dfn}

\begin{rem}
    We apply a change of variables:
    \begin{equation*}
        h(\epsilon) = \sigma_i \log \frac{d_\infty}{\epsilon} + \log \sigma_i \ \text{ for } \epsilon>0.
    \end{equation*}
    If $\epsilon = 0$, we put $h(\epsilon) = \infty$. 
    For every borcode $e$, set $b'(e) \coloneqq h(b(e))$, and $d'(e) \coloneqq h(d(e))$.
    Through this change of variables, $S_\delta^{(i)}(X)$ can be seen in terms of the magnitude of a persistence module $M$ satisfying
    \begin{equation*}
        M \cong \bigoplus_{\substack{e \in PH_i(X_\bullet) \\ |e| >\delta}} \mathbf{k} \left[d'(e), b'(e) \right),
    \end{equation*}
     that is, this magnitude is given by
    \begin{equation*}
        |M| = S_\delta^{(i)}(X) = \sum_{\substack{e \in PH_i(X_\bullet) \\ |e| >\delta}} \left\{ \exp\left(-d'(e)\right) - \exp\left(-b'(e)\right) \right\}
    \end{equation*}
    in the sense of Govc-Hepworth \cite{GH21}.
\end{rem}  

\begin{dfn}
    If the limit
    \begin{equation*}
        \beta_i^{\rm phf}(X) \coloneqq \lim_{\delta \searrow 0} \frac{S_\delta^{(i)}(X)}{|\log \delta|} 
    \end{equation*}
    exists, we call $\beta_i^{\rm phf}(X)$ the $i$-th average $ph$-fractal Betti number. 
    If $\beta_i^{\rm phf}(X)$ can be defined for all $i$, we define the average $ph$-fractal Euler number by
    \begin{equation*}
        \chi_{a}^{\rm phf}(X) \coloneqq \sum_{i=0}^{d} (-1)^i \beta_i^{\rm phf}(X).
    \end{equation*}
    \label{def:ph-fractal-betti-number}
\end{dfn}

We provide an useful lemma for computation of examples in the following section.

\begin{lem}
  For a compact subset $X$ of $\mathbb{R}^d$, assume $\sigma_i \neq 0$. 
  Then, the following properties are equivalent. 
  \begin{enumerate}
    \item There exists a monotonically decreasing sequence $\{a_j\}_{j=1}^\infty$ $(a_j \in \mathbb{R}_{>0})$ converging to $0$ which satisfies 
    \begin{enumerate}
      \item $\displaystyle \lim_{j \rightarrow \infty} \frac{1}{ \left| \log a_j\right|}S_{a_{j+1}}^{(i)}(X) = \beta$; 
      \item There exists a constant $0 < c < 1$ such that $c \cdot a_j \leq a_{j+1} < a_j$ for any $j\in \mathbb{N}$.
    \end{enumerate}
    \item $\beta_i ^{\rm phf} (X)$ exists and equals $\beta$.
  \end{enumerate}
  \label{lem: the existence of average ph fractal Betti number}
\end{lem}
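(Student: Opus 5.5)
The plan is a sandwich argument that exploits the monotonicity of $\delta \mapsto S_\delta^{(i)}(X)$. Every summand $\left(\frac{d(e)}{d_\infty}\right)^{\sigma_i} - \left(\frac{b(e)}{d_\infty}\right)^{\sigma_i}$ is nonnegative, because $\sigma_i>0$ (PH-complexity is nonnegative and we assume $\sigma_i \neq 0$) and $b(e)\le d(e)$. Hence $S_\delta^{(i)}(X)$ is nonincreasing in $\delta$: decreasing $\delta$ only adds barcodes to the sum. It is also finite for each $\delta>0$, since only finitely many barcodes have lifetime $>\delta$ (that is what makes $I_{i,\delta}$ finite), and each term is bounded. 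A point to be careful about is bars with $d(e)=\infty$. Since $X_\epsilon$ is contractible for $\epsilon \ge d_\infty$, such bars occur only in degree $0$, and I would either truncate them at $d_\infty$ or note that there is exactly one of them and it contributes a constant, which disappears after division by $|\log\delta|$.

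\textbf{(2)$\Rightarrow$(1).} I take $a_j = 2^{-j}$, so that (b) holds with $c=1/2$. Then
\[
\frac{S^{(i)}_{a_{j+1}}}{|\log a_j|} = \frac{S^{(i)}_{a_{j+1}}}{|\log a_{j+1}|}\cdot \frac{|\log a_{j+1}|}{|\log a_j|}.
\]
The second factor is $(j+1)/j \to 1$, so the left side tends to $\beta$.

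\textbf{(1)$\Rightarrow$(2).} Fix $\delta$ small. Since $a_j \searrow 0$, there is a unique $j$ with $a_{j+1} < \delta \le a_j$. Monotonicity then gives
\[
S^{(i)}_{a_j} \le S^{(i)}_\delta \le S^{(i)}_{a_{j+1}}.
\]
Because $\delta\le a_j<1$ eventually, $|\log a_j| \le |\log \delta| < |\log a_{j+1}|$. For the upper bound,
\[
\frac{S_\delta}{|\log\delta|} \le \frac{S_{a_{j+1}}}{|\log a_j|} \to \beta.
\]
For the lower bound,
\[
\frac{S_\delta}{|\log\delta|} \ge \frac{S_{a_j}}{|\log a_{j+1}|} = \frac{S_{a_j}}{|\log a_{j-1}|}\cdot\frac{|\log a_{j-1}|}{|\log a_{j+1}|},
\]
and the first factor tends to $\beta$ by hypothesis (a), shifting the index by one. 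These inequalities use $S\ge 0$; that is harmless, since $S\ge0$ forces $\beta\ge0$.

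The key remaining step, and the only place (b) enters, is showing that the log ratios tend to $1$. From $c\,a_j \le a_{j+1}$ we get $|\log a_{j+1}| \le |\log a_j| + |\log c|$, so $|\log a_{j+1}|/|\log a_{j-1}| \le 1 + 2|\log c|/|\log a_{j-1}| \to 1$, because $|\log a_{j-1}|\to\infty$. The lower and upper bounds therefore both converge to $\beta$, and $\beta_i^{\rm phf}(X)=\beta$ follows.

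I expect the main obstacle to be bookkeeping rather than any deep point. The pieces to check are: positivity of $\sigma_i$ for the monotonicity; the treatment of infinite bars and of the strict versus non-strict inequality $|e|>\delta$ at the endpoints, which monotonicity handles; and the index shift in hypothesis (a), which pairs $S_{a_{j+1}}$ with $|\log a_j|$ rather than with $|\log a_{j+1}|$. Condition (b) is exactly what makes this mismatch irrelevant.
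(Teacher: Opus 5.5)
Your proposal is correct and is essentially the paper's proof: the same sandwich $S^{(i)}_{a_j}\le S^{(i)}_\delta\le S^{(i)}_{a_{j+1}}$ from the monotonicity of $\delta\mapsto S^{(i)}_\delta$. Condition (b) enters only through $|\log a_{j+1}|\le |\log a_{j-1}|+2|\log c|$ (the paper's $c^2 a_{j-1}\le a_{j+1}$), which makes the index shift in (a) harmless; you also spell out the nonnegativity of the summands and the converse via $a_j=2^{-j}$, which the paper just calls obvious.
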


\begin{proof}
    We show that ($1$) implies ($2$). 
    For any sufficiently small $\delta > 0$, there exists a unique $j \in \mathbb{N}$ such that $a_{j+1} \leq \delta < a_j$. 
    By definition, the function $S_{\gamma}^{(i)}(X)$ is non-increasing with respect to $\gamma$; that is, $S_{\gamma}^{(i)}(X) \geq S_{\gamma'}^{(i)}(X)$ holds for $0 < \gamma < \gamma'$
    Furthermore, from condition (b), $c^2 a_{j-1} \leq c a_j \leq a_{j+1}$.
    Thus we get 
    \begin{equation*}
        \frac{1}{ \left| \log c^2 \cdot a_{j-1}\right|}S_{a_{j}}^{(i)}(X) 
        \leq 
        \frac{1}{ \left| \log \delta \right|}S_{\delta}^{(i)}(X)
        \leq 
        \frac{1}{ \left| \log a_j\right|}S_{a_{j+1}}^{(i)}(X).
    \end{equation*}
    Since 
    \begin{equation*}
        \frac{S_{a_{j+1}}^{(i)}(X)}{|\log a_j|} - \frac{S_{a_{j}}^{(i)}(X)}{| \log c^2 \cdot a_{j-1}|} = 
        \frac{S_{a_{j+1}}^{(i)}(X)}{|\log a_j|} - \frac{S_{a_{j}}^{(i)}(X)}{|\log a_{j-1} |} \frac{|\log a_{j-1}|}{|\log a_{j-1} + 2 \log c|} \notag \\
        \to 0,
    \end{equation*}
    we obtain
    \begin{equation*}
        \left| \frac{S_\delta ^{(i)}(X)}{|\log \delta |} - \beta \right| \to 0.
    \end{equation*}
    \indent
    The converse is obvious. 
    \qed
\end{proof}
\vspace{1em}
As is clear from the definition, under 'good' conditions where the average fractal Euler number $\chi_{f}^{a}$ of Llorente-Winter\cite{LW07} is determined, it agrees with our average $ph$-fractal Euler number. 

\begin{prop}
    \label{prp: the relation between phf Euler number and average fractal Euler number}
    Let $X$ be a compact subset of $\mathbb{R}^d$. 
    Assume that $\mathfrak{B} = \{0\}$, the Euler exponent $\sigma > 0$ and average fractal Euler number exist, and each of the PH-complexity $comp_{PH_i} (X)$ equals to $0$ or the Euler exponent. 
    Also assume that $b (e) = 0$ for any barcode $e \in PH_i (X_\bullet)$ and $\delta^{\sigma_i}I_{i,\delta}(X) = o(|\log \delta|)\ (\delta \searrow 0)$ for all $i$. 
    Then it holds that
    \begin{equation*}
        \chi_{f}^{a}(X) = \chi_{a}^{\rm phf} (X).
    \end{equation*}
\end{prop}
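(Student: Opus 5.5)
The plan is to rewrite the Llorente--Winter integral in (\ref{avfE}) as a sum over barcodes and compare it term by term with $S_\delta^{(i)}(X)$. Since $\mathfrak{B}(X)=\{0\}$, every $X_\epsilon$ with $\epsilon>0$ has finite Betti numbers. By the barcode decomposition theorem \cite{W15} we then have, for $\epsilon>0$,
\[
b_i(X_\epsilon)=\sum_{e\in PH_i(X_\bullet)}\jeden_e(\epsilon), \qquad \chi(X_\epsilon)=\sum_{i=0}^d(-1)^i\sum_{e}\jeden_e(\epsilon).
\]
Because $b(e)=0$ for every barcode, each $e$ is an interval from $0$ to $d(e)$, so $|e|=d(e)$. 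Bars with $d(e)=\infty$ (such as the essential $H_0$ class) will be truncated at the upper limit of integration. I will normalize so that the upper limit of integration is $d_\infty$; replacing it by a fixed constant changes the integral only by a bounded amount, as explained below.

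The first step is a finiteness remark. For any $\epsilon_0>0$, the bars alive at some $\epsilon\ge\epsilon_0$ have length at least $\epsilon_0$. By Proposition \ref{prp: equivalent definition of PH-complexity}, $\sum_e|e|^\alpha<\infty$ for some $\alpha$, so there are only finitely many such bars. Hence $b_i(X_\epsilon)$ is uniformly bounded on $[\epsilon_0,\infty)$. It follows that changing the endpoints of integration on any fixed interval bounded away from $0$ changes the integrals below by $O(1)=o(|\log\delta|)$.

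The second step treats an index $i$ with $\sigma_i=\sigma$. Exchanging the finite sum and the integral gives
\[
\int_\delta^{d_\infty}\Big(\frac{\epsilon}{d_\infty}\Big)^{\sigma} b_i(X_\epsilon)\frac{d\epsilon}{\epsilon}
=\frac1\sigma\sum_{d(e)>\delta}\Big\{\Big(\frac{\min(d(e),d_\infty)}{d_\infty}\Big)^{\sigma}-\Big(\frac{\delta}{d_\infty}\Big)^{\sigma}\Big\}.
\]
Since $|e|=d(e)$, the index set here is exactly the set of bars with $|e|>\delta$, so the right-hand side equals
\[
S_\delta^{(i)}(X)-\frac{1}{\sigma}\Big(\frac{\delta}{d_\infty}\Big)^{\sigma}I_{i,\delta}(X)+O(1),
\]
where the $O(1)$ accounts for the finitely many bars exceeding $d_\infty$. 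By the hypothesis $\delta^{\sigma_i}I_{i,\delta}(X)=o(|\log\delta|)$, the middle term vanishes after dividing by $|\log\delta|$.

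The third step treats an index $i$ with $\sigma_i=0$. By definition $S_\delta^{(i)}(X)=0$, so I must show that the corresponding part of the Llorente--Winter integral is $o(|\log\delta|)$. By Proposition \ref{prp: equivalent definition of PH-complexity}, $\sum_e|e|^{\alpha}<\infty$ for every $\alpha>0$, in particular for $\alpha=\sigma>0$. The $i$-th integral is bounded by $\frac1\sigma\sum_e(\min(d(e),d_\infty)/d_\infty)^\sigma$, which is finite independently of $\delta$. Hence this term tends to $0$ after division by $|\log\delta|$.

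Finally, summing with signs $(-1)^i$ gives
\[
\frac{1}{|\log\delta|}\int_\delta^{1}\Big(\frac{\epsilon}{d_\infty}\Big)^\sigma\chi(X_\epsilon)\frac{d\epsilon}{\epsilon}
=\frac{1}{|\log\delta|}\sum_{i}(-1)^iS^{(i)}_\delta(X)+o(1).
\]
The left-hand side converges to $\chi_f^a(X)$ by assumption, so the sum on the right converges as well. There is one subtlety I expect to be the main obstacle: the statement requires each $\beta_i^{\rm phf}(X)$ to exist separately, not just their alternating sum. To obtain this I would redo the same comparison for each $b_i$ individually. This requires each individual average $\lim_{\delta\searrow 0}\frac{1}{|\log\delta|}\int_\delta^{d_\infty}(\epsilon/d_\infty)^\sigma b_i(X_\epsilon)\frac{d\epsilon}{\epsilon}$ to exist, which the hypotheses only guarantee for the alternating sum. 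If that cannot be extracted, the conclusion should be read as equality of the limits of the alternating sums, that is, with $\chi_{a}^{\rm phf}(X)$ interpreted as $\lim_\delta\frac{1}{|\log\delta|}S_\delta(X)$ as in (\ref{phfS}). A secondary care point is the boundary convention $|e|>\delta$ versus $|e|\ge\delta$. Bars with $|e|=\delta$ are finitely many and contribute zero to the integrand's mass at the endpoint, so this convention does not matter.
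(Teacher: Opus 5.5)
Your proposal is correct and follows essentially the same route as the paper. Both arguments barcode-decompose $b_i(X_\epsilon)$ and use $b(e)=0$ to write the $i$-th Llorente--Winter integral as $S_\delta^{(i)}(X)-\frac{1}{\sigma}(\delta/d_\infty)^{\sigma}I_{i,\delta}(X)$, then remove the correction term with the $o(|\log\delta|)$ hypothesis and show the $\sigma_i=0$ part is bounded; you justify that boundedness, the truncation of infinite bars and the change of upper limit more carefully than the paper does, using $\mathrm{comp}_{PH_i}(X)=\inf\{\alpha : \sum_e |e|^\alpha<\infty\}$. The subtlety you flag is genuine and is equally present in the paper's proof, whose closing claim that the limit identity holds ``for all $i$'' tacitly assumes each individual Betti average exists, so reading the conclusion as equality of the limits of the alternating sums (as in the introduction's definition of $\chi_a^{\mathrm{phf}}$), or adding individual existence as a hypothesis, is the right fix.
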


In particular, all examples of fractals considered by Llorente-Winter in \cite{LW07} satisfy the above conditions, thus their fractal Euler number coincides with ours. 

\begin{proof}
    By $\mathfrak{B} (X) = \{0\}$, we see
    \begin{equation}
        \chi_{f}^{a} (X)=\lim_{\delta \searrow 0} \sum_{i=0}^{d} \frac{(-1)^i}{|\log \delta|} \int_\delta^1 \left(\frac{\epsilon}{d_\infty}\right)^\sigma b_i(X_\epsilon) \frac{d\epsilon}{\epsilon}. 
        \label{eq: decomposition of Euler number}
    \end{equation}
    Moreover, since $PH_\ast (X_\bullet)$ with the filtration indexed by $\mathbb{R}_{>0}$ is pointwise finite-dimensional, it admits a barcode decomposition.
    Then, since $b(e) = 0$ for any barcode $e \in PH_i (X_\bullet)$, the integral part of Equation \ref{eq: decomposition of Euler number} can be expressed as follows: 
    \begin{equation*}
        A_{i,\delta} (X) \coloneqq
        \int_\delta^1 \left(\frac{\epsilon}{d_\infty}\right)^\sigma b_i(X_\epsilon) \frac{d\epsilon}{\epsilon}
        = \frac{1}{\sigma} \sum_{\substack{e\in PH_i (X_\bullet)\\ d(e) > \delta}} \left\{ \left( \frac{d(e)}{d_\infty} \right)^{\sigma} - \left( \frac{\delta}{d_\infty} \right)^{\sigma}\right\}
    \end{equation*}
    Here, we note that $|e|>\delta$ and $d(e) >\delta$ are the same because $b(e) = 0$. 
    Then, in the case of $\sigma_i =\sigma$, we have by the assumption
    \begin{align*}
        \frac{S_\delta^{(i)} (X) - A_{i,\delta}(X)}{|\log \delta|} 
        &= \frac{1}{\sigma|\log\delta|} \sum_{\substack{e\in PH_i (X)\\ d(e) > \delta}} \left( \frac{\delta}{d_\infty} \right)^{\sigma}\\
        &= \frac{1}{\sigma d_\infty^\sigma}\frac{\delta^\sigma I_{i,\delta} (X)}{|\log\delta|} \rightarrow 0 \quad (\delta \searrow 0).
    \end{align*}
    In the case of $\sigma_i = 0$, 
    it means that $A_{i,\delta} (X)$ is bounded as $\delta \searrow 0$, thus $\displaystyle \lim_{\delta \searrow 0} \frac{A_{i,\delta} (X)}{|\log \delta|} = 0$.
    Furthermore, $S_\delta^{(i)} (X) = 0$ by the definition. 
    Therefore, 
    \begin{equation*}
        \lim_{\delta \searrow 0}\frac{1}{|\log \delta|} \int_\delta^1 \left(\frac{\epsilon}{d_\infty}\right)^\sigma b_i(X_\epsilon) \frac{d\epsilon}{\epsilon}
        = \beta_i ^{\rm phf} (X)
    \end{equation*}
    holds for all $i$.
    \qed
\end{proof}

\section{Example \I}

\subsection{Cantor set on the line}
    The Cantor set $C$ is a classical fractal constructed as illustrated in figure \ref{fig: Cantor set}. 
    First, we divide the interval $[0,1]$ into three and remove the open middle third $(\frac{1}{3}, \frac{2}{3})$. 
    Subsequently, the remaining intervals, $\left[0, \frac{1}{3}\right]$ and $\left[\frac{2}{3},1\right]$, are each divided into three, from which the middle thirds are similarly removed. 
    The Cantor set $C$ is the self-similar set obtained by iterating this process infinitely many times. 
    Then, $PH_0 (C)$ consists of $2^{i-1}$ barcodes $e$ of the form
    \begin{equation*}
        \left(b(e), d(e)\right) = \left(0,\frac{1}{6}\left( \frac{1}{3}\right)^{i-1} \right).
    \end{equation*}

    \begin{figure}[hbt]
    \centering
    \begin{tikzpicture}
        \begin{scope}[scale=1]
        \draw[thick] (0,0)--(9,0);

        \draw[thick] (0,-1/3)--(3,-1/3);
        \draw[thick] (6,-1/3)--(9,-1/3);

        \draw[thick] (0,-2/3)--(1,-2/3);
        \draw[thick] (2,-2/3)--(3,-2/3);
        \draw[thick] (6,-2/3)--(7,-2/3);
        \draw[thick] (8,-2/3)--(9,-2/3);

        \draw[thick] (0,-1)--(1/3,-1);
        \draw[thick] (2/3,-1)--(1,-1);
        \draw[thick] (2,-1)--(2+1/3,-1);
        \draw[thick] (2+2/3,-1)--(3,-1);
        \draw[thick] (6,-1)--(6+1/3,-1);
        \draw[thick] (6+2/3,-1)--(7,-1);
        \draw[thick] (8,-1)--(8+1/3,-1);
        \draw[thick] (8+2/3,-1)--(9,-1);
        \foreach \P in {4.5}{
            \foreach \Q in {1/6, 1/3, 1/2}{
            \fill[black] (\P, -7/6-\Q) circle (1pt);
            }
        }
        \end{scope}
    \end{tikzpicture}
    \caption{Cantor set $C$}
    \label{fig: Cantor set}
    \end{figure}
    Hence, the $0$th PH-complexity $\sigma_0$ is 
    \begin{equation*}
    \sigma_0 = \frac{\log 2}{\log 3}. 
    \end{equation*}
    Set $a_j = \frac{1}{2} \left( \frac{1}{3} \right)^j$ ($j\in \N$). 
    This sequence satisfies the condition 2(b) of Lemma \ref{lem: the existence of average ph fractal Betti number}. 
    Then, since the diameter of $C$ is $1$, $S_{a_{j+1}}^{(0)} (C)$ is computed as follows:
    \begin{align*}
    S_{a_{j+1}}^{(0)} (C) &= \frac{1}{\sigma_0}\left\{1 + \sum_{i=1}^{j} \left\{\frac{1}{6} \left(\frac{1}{3}\right)^{i-1}\right\}^{\sigma_0} 2^{i-1} \right\}\\
    &= \frac{1}{\sigma_0} \left\{ 1+ \left(\frac{1}{6}\right)^{\sigma_0} j \right\}
    \end{align*}
    Therefore, the limit is given by
    \begin{equation*}
        \beta_0^{\rm phf}(C) = \lim_{j \rightarrow \infty}\frac{S_{a_{j+1}}^{(0)}(C)}{ \left| \log a_j\right|} = \frac{2^{-(\sigma_0+1)}}{\log2} = 0.466\dots. 
    \end{equation*}
    By definition, $\chi_{a}^{\rm phf}(C) = \beta_0^{\rm phf} (C)$, and it coincides with average fractal Euler number $\chi_f^{a}(C)$ computed in Example$2.11$ in Llerente-Winter \cite{LW07}.

\subsection{Sierpi\'{n}ski carpet}
    The Sierpinski carpet $SC$ is a fractal constructed as figure \ref{fig: Sierpinski carpet}. 
    First, we divide the square $[0,1]\times[0,1]$ into nine and remove the central open square $\left(\frac{1}{3}, \frac{2}{3}\right) \times \left(\frac{1}{3}, \frac{2}{3}\right)$. 
    Subsequently, for each of the remaining $8$ squares, we divide the square into nine and removed the central square. 
    The Sierpinski carpet $SC$ is the self-similar set obtained by iterating this process infinitely many times. 
    \begin{figure}[hbt]
    \centering
    \begin{minipage}{0.2\textwidth}
        \begin{tikzpicture}
        \begin{scope}[scale=4/15]
            \filldraw [black] (0,0) rectangle (9,9);
        \end{scope}
        \end{tikzpicture}
    \end{minipage}
    \begin{minipage}{0.2\textwidth}
        \begin{tikzpicture}
        \begin{scope}[scale=4/15]
            \filldraw [black] (0,0) rectangle (9,9);
            \fill [white] (3,3) rectangle (6,6);
        \end{scope}
        \end{tikzpicture}
    \end{minipage}
    \begin{minipage}{0.2\textwidth}
        \begin{tikzpicture}
        \begin{scope}[scale=4/15]
            \filldraw [black] (0,0) rectangle (9,9);
            \fill [white] (3,3) rectangle (6,6);
            \foreach \p in {1,4,7}{
            \foreach \q in {1,4,7}{
                \fill [white] (\p,\q) rectangle (\p+1, \q+1);
            }
            }
        \end{scope}
        \end{tikzpicture}
    \end{minipage}
    \begin{minipage}{0.2\textwidth}
        \begin{tikzpicture}
        \begin{scope}[scale=4/15]
            \filldraw [black] (0,0) rectangle (9,9);
            \fill [white] (3,3) rectangle (6,6);
            \foreach \p in {1,4,7}{
            \foreach \q in {1,4,7}{
                \fill [white] (\p,\q) rectangle (\p+1, \q+1);
            }
            }
            \foreach \p in {1/3,4/3,7/3, 1/3 + 3,4/3+3,7/3+3, 1/3+6,4/3+6,7/3+6}{
            \foreach \q in {1/3,4/3,7/3, 1/3 + 3,4/3+3,7/3+3, 1/3+6,4/3+6,7/3+6}{
                \fill [white] (\p,\q) rectangle (\p+1/3, \q+1/3);
            }
            }
        \end{scope}
        \end{tikzpicture}
    \end{minipage}
    \caption{Sierpinski carpet $SC$}
    \label{fig: Sierpinski carpet}
    \end{figure}
    Since the set $SC$ is path connected, the $0$th PH-complexity of $SC$ is $0$.
    Then, the $0$th average $ph$-fractal Betti number of $SC$ is also $0$ by our definition.\\
    \indent
    Next, we think about the $1$st average $ph$-fractal Betti number $\beta_1 ^{\rm phf} (SC)$. 
    Note that $PH_1 (SC)$ consists of $8^{i-1}$ barcodes of the form
    \begin{equation*}
    \left( 0, \frac{1}{6} \left( \frac{1}{3}\right)^{i-1}\right).
    \end{equation*}
    Then, the $1$st PH-complexity of $SC$ is 
    \begin{equation*}
        \sigma_1 = \frac{\log 8}{\log 3}. 
    \end{equation*}
    Set $a_j = \left(\frac{1}{3}\right)^j \frac{1}{2}$. 
    This sequence satisfies the condition 2(b) of Lemma \ref{lem: the existence of average ph fractal Betti number}. 
    Since the diameter of $SC$ is $\sqrt{2}$, $S_{a_{j+1}}^{(1)} (SC)$ is computed as follows:
    \begin{align*}
    S_{a_{j+1}}^{(1)} (SC) = \frac{1}{\sigma_1}\sum_{i=1}^{j} \left\{\frac{1}{6\sqrt{2}}\left(\frac{1}{3}\right)^{i-1}\right\}^{\sigma_1} 8^{i-1}
    = \left(\frac{1}{6\sqrt{2}}\right)^{\sigma_1} j
    \end{align*}
    Therefore, the limit is given by
    \begin{equation*}
        \beta_1^{\rm phf} (SC) = \lim_{j \rightarrow \infty}\frac{S_{a_{j+1}}^{(1)}(SC)}{ \left| \log a_j\right|} = \frac{1}{ \log 8}\left(\frac{1}{6\sqrt{2}}\right)^{\frac{\log 8}{\log3}} = 0.0084\dots. 
    \end{equation*}
    Since $\beta_0^{\rm phf}(SC) =0$, the average $ph$-fractal Euler number equals $-\beta_1^{\rm phf}(SC)$, and it coincides with $\chi_f^{a} (SC) = -\frac{u^s}{\log 8}$ computed in Example $2.7$ in Llorente-Winter \cite{LW07} (but an incorrect approximation is written there).

\section{Example \II}
Next, we calculate the average $ph$-fractal Betti number for the case where the average fractal Euler number cannot be defined. 
Specifically, we consider the Cantor dust $C\times C$ and the Menger sponge $M$ below.

\subsection{Cantor dust in the plane}
    The Cantor dust $C \times C$ is a fractal given by the product of two Cantor sets as seen in figure \ref{fig: const. of Cantor dust}.
    \begin{figure}[hbt]
    \centering
    \begin{minipage}{0.2\textwidth}
        \begin{tikzpicture}
        \begin{scope}[scale=4/15]
            \filldraw [black] (0,0) rectangle (9,9);
        \end{scope}
        \end{tikzpicture}
    \end{minipage}
    \begin{minipage}{0.2\textwidth}
        \begin{tikzpicture}
        \begin{scope}[scale=4/15]
            \foreach \p in {0,6}{
            \foreach \q in {0,6}{
                \filldraw [black] (\p,\q) rectangle (\p+3, \q +3);
            }
            }
        \end{scope}
        \end{tikzpicture}
    \end{minipage}
    \begin{minipage}{0.2\textwidth}
        \begin{tikzpicture}
        \begin{scope}[scale=4/15]
            \foreach \p in {0,2,6,8}{
            \foreach \q in {0,2, 6 ,8}{
                \filldraw [black] (\p,\q) rectangle (\p+1, \q +1);
            }
            }
        \end{scope}
        \end{tikzpicture}
    \end{minipage}
    \begin{minipage}{0.2\textwidth}
        \begin{tikzpicture}
        \begin{scope}[scale=4/15]
            \foreach \p in {0,2/3,2,2+2/3,6,6+2/3,8,8+2/3}{
            \foreach \q in {0,2/3,2,2+2/3,6,6+2/3,8,8+2/3}{
                \filldraw [black] (\p,\q) rectangle (\p+1/3, \q +1/3);
            }
            }
        \end{scope}
        \end{tikzpicture}
    \end{minipage}
    \caption{Cantor dust $C \times C$}
    \label{fig: const. of Cantor dust}
    \end{figure}

    The Cantor dust contains configurations resembling parallelly arranged Cantor sets (and their scaled copies), as illustrated on the right side of Figure \ref{fig: cantor dust 1}. 
    Owing to these geometric feature, the $1$st Betti function of the Cantor dust exhibits the behavior shown in Figure \ref{fig: cantor dust Betti curve}; 
    specifically, the Betti number is not determined at $\epsilon = \frac{1}{6}\left(\frac{1}{3}\right)^{i-1}$ for $i \in \N$. 
    Consequently, the average fractal Euler number $\chi_f^a$ cannot be defined for the Cantor dust.\\
    \indent
    First, we calculate the $0$th average $ph$-fractal Betti number for the Cantor dust. 
    Note that $d_\infty (C\times C) = \sqrt{2}$. 
    The basis of $PH_0 (C \times C)$ consists of the barcode $\left(0,\sqrt{2}\right)$ and $3\cdot4^{i-1}$ barcodes of the form
    \begin{equation*}
    \left( 0, \frac{1}{6} \left( \frac{1}{3}\right)^{i-1}\right).
    \end{equation*}
    Then, the $0$th PH-complexity of $C \times C$ is $\sigma_0 = \frac{\log 4}{\log 3}$. 
    Therefore, by Lemma \ref{lem: the existence of average ph fractal Betti number} with setting $a_j = \frac{1}{6}\left(\frac{1}{3}\right)^{j-1}$, $\beta_0^{\rm phf} (C \times C)$ is calculated as
    \begin{equation}
        \beta_0 ^{\rm phf} (C \times C) = \frac{3}{\log 4}\left(\frac{1}{6\sqrt{2}}\right)^{\frac{\log 4}{\log 3}} = 0.1456\dots.
        \label{eq: cantor dust 0th}
    \end{equation} 

    \begin{figure}[bt]
        \centering
        \includegraphics[width=\textwidth]{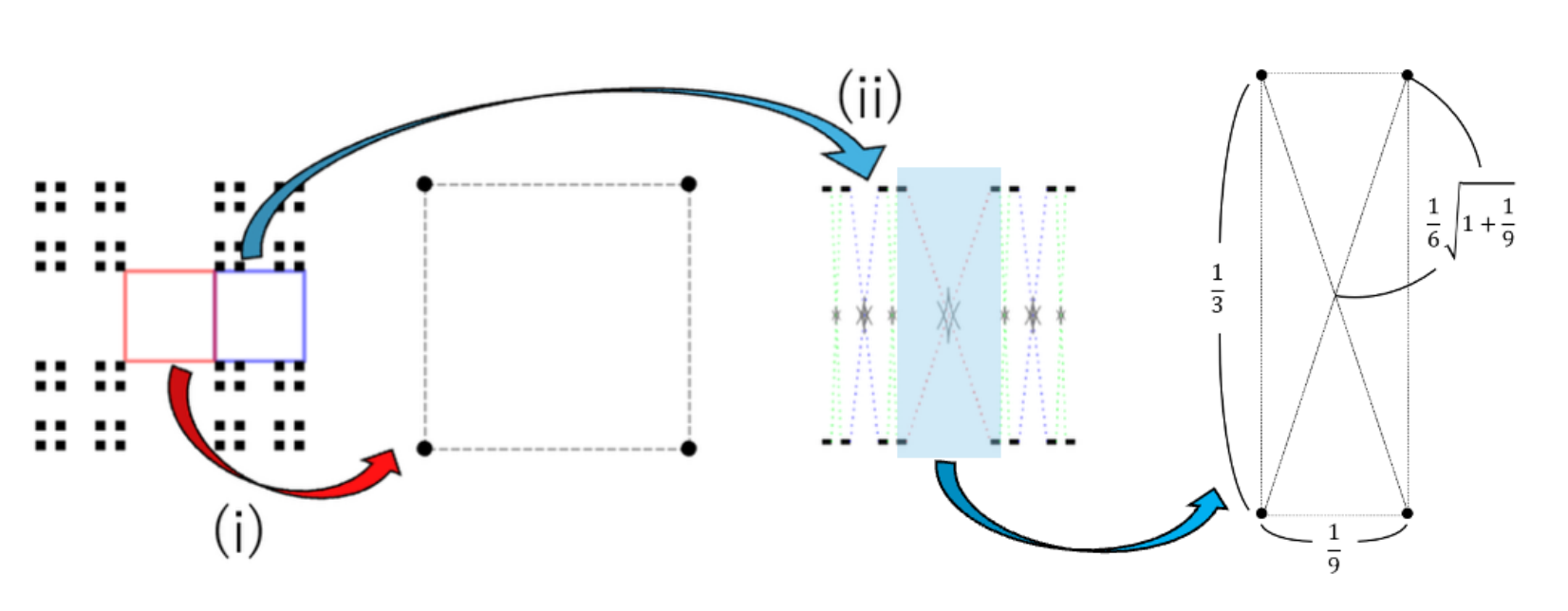}
        \caption{}
        \label{fig: cantor dust 1}
    \end{figure}
    \indent
    Next, we consider the $1$st average $ph$-fractal Betti number of $C \times C \ (=X)$. 
    There are two types of generators of $PH_1 (X_\bullet)$. 
    The `principal holes' are indicated by Figure \ref{fig: cantor dust 1}(i) and its iterations, and the `little barcode dust' is Figure \ref{fig: cantor dust 1}(ii) and its iterations. 
    Precisely, those are represented by: 
    \begin{enumerate}
        \item[(i)] $4^{j-1}$ barcodes of the form $\displaystyle \left(\frac{1}{6}\left(\frac{1}{3}\right)^{j-1}, \frac{\sqrt{2}}{6}\left(\frac{1}{3}\right)^{j-1} \right)$  ($j \in \mathbb{N}$), and \label{item: cantor dust principal}
        \item[(ii)] $4\cdot 4^{j-1}\cdot2^{i-1}$ barcodes of the form \small{$\displaystyle \left(\frac{1}{6}\left(\frac{1}{3}\right)^{j-1}, \frac{1}{6}\sqrt{1+\frac{1}{3^{2i}}}\left(\frac{1}{3}\right)^{j-1} \right)$  ($i, j \in \mathbb{N}$)}. \label{item: cantor dust dust}
    \end{enumerate}
    The $1$st appearance of little barcode dust is caused by barcodes of the form (ii) with $j=1$, (i.e. $1$st step); there are countably many barcodes indexed by $i=1,2,\dots$ and then we put the lifetime
    \begin{equation*}
        l_i = \frac{1}{6} \left(\sqrt{1+\frac{1}{3^{2i}}}-1\right). 
    \end{equation*}
    As shown in Figure \ref{fig: Betti iteration}, the barcodes in $I_j$ ($j \geq 2$) consist of the iteration of the barcode in $I_1$.\\
    \begin{figure}[bt]
        \includegraphics[width=9cm]{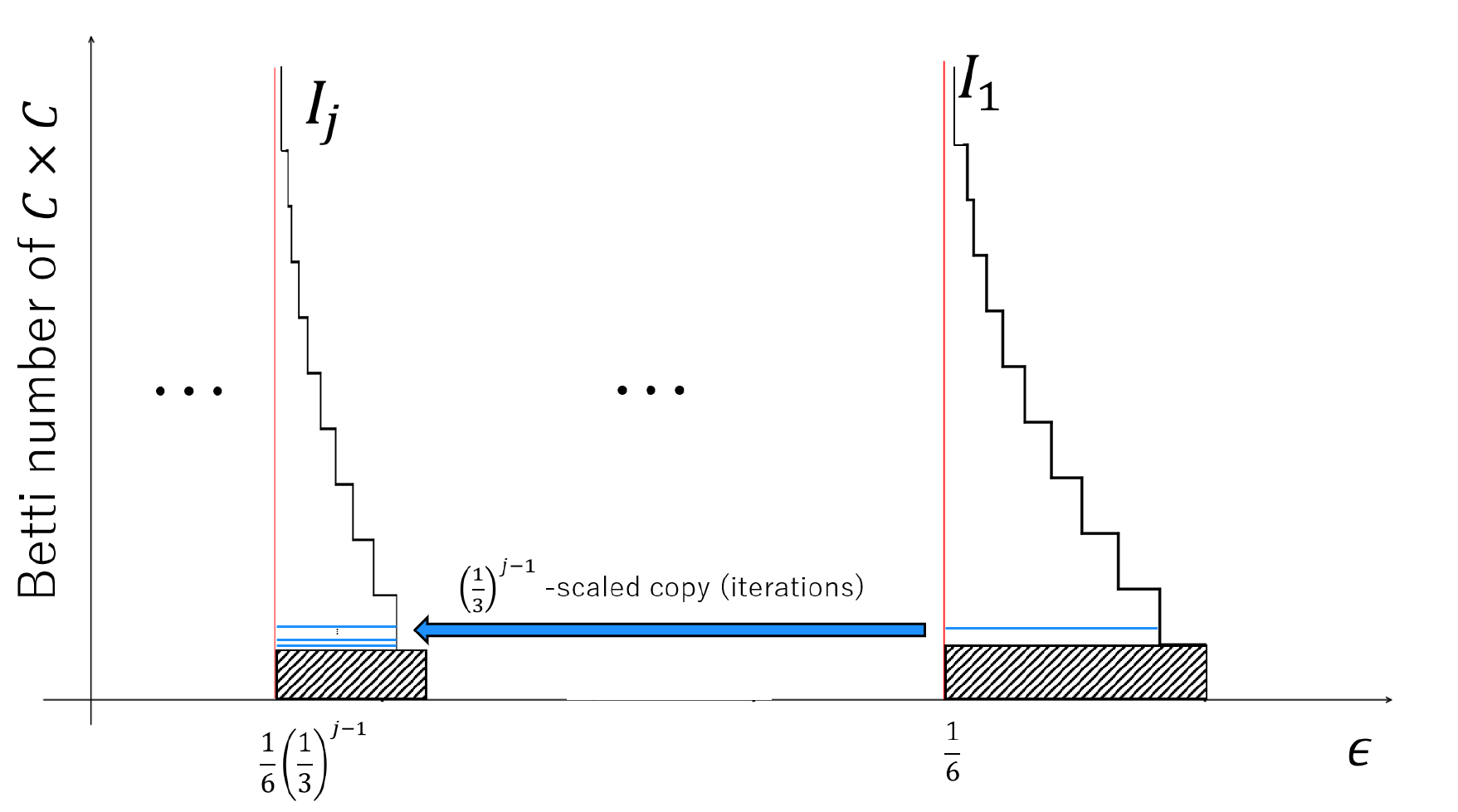}
        \caption{}
        \label{fig: Betti iteration}
    \end{figure}
    \indent
    The $1$st PH-complexity is $\sigma_1 = \sigma_0 = \frac{\log 4}{\log 3}$. 
    In fact, the growth rate of barcodes controlled by $i$ of the form (ii) does not affect the overall growth rate. 
    For any $\epsilon>0$, take $i$ such that $l_i > \epsilon$. 
    From the form of $l_i$, the growth rate of the number of $e$ such that $l_i = |e| > \epsilon$ as $\epsilon \searrow 0$ is less than $\frac{\log 4}{\log 3}$. 
    Furthermore, the growth rate of the number of scaled copies of a barcode $e$ with $|e|=l_i$ is $\frac{\log 4}{\log 3}$. \\
    \indent
    Otherwise, we may use Proposition \ref{prp: equivalent definition of PH-complexity} to directly compute $\sigma_1$. 
    It is clear that $\frac{\log 4}{\log 3} \leq \sigma_1$, so we check $\sigma_1 \leq \frac{\log 4}{\log 3}$.
    For any $\alpha > \frac{\log 4}{\log 3}$, we have $0 < \left(\frac{1}{3}\right)^\alpha 4 < 1$. 
    Thus, since $\sqrt{1+\frac{1}{3^{2i}}} - 1 < \frac{1}{3^i}$, there exists $M \in R$ satisfying; 
    \begin{multline*}
        \sum_{e\in PH_1(C\times C)}|e|^\alpha = 
        \sum_{j=1}^{\infty} \left(\frac{\sqrt{2}}{6} \left(\frac{1}{3}\right)^{j-1} - \frac{1}{6}\left(\frac{1}{3}\right)^{j-1}\right)^\alpha 4^{j-1} \\
        +
        \sum_{j=1}^{\infty} \sum_{i=1}^{\infty} \left(\frac{1}{6}\sqrt{1+\frac{1}{3^{2i}}} \left(\frac{1}{3}\right)^{j-1} - \frac{1}{6}\left(\frac{1}{3}\right)^{j-1}\right)^\alpha 4 \cdot 4^{j-1} \cdot 2^{i-1}
        < M.
    \end{multline*}
    Hence $\sigma_1 \leq \frac{\log 4}{\log 3}$ holds, and we have $\sigma_1 = \frac{\log 4}{\log 3}$.\\
    \indent
    Let $P_1$ and $P_2$ denote the sets of barcodes of the former and latter forms (i) and (ii), respectively. 
    $S_\delta^{(1)}(C\times C)$ is separated to two parts; let $S_\delta^{(1), P_1}$ be the sum running over $e \in P_1$ and $S_\delta^{(1), P_2}$ be the sum running over $e\in P_2$ ($|e|>\delta$).\\
    \indent
    For the contribution from the form (i), apply Lemma \ref{lem: the existence of average ph fractal Betti number} with $a_j = \frac{\sqrt{2}-1}{6}\left(\frac{1}{3}\right)^{j-1}$ ($j \in \N$), then we have 
    \begin{equation}
        \lim_{\delta \searrow 0} \frac{S_\delta^{(1), P_1}}{\left|\log \delta\right|} = \frac{1}{\sigma_1 \log 3}\left(\frac{1}{6\sqrt{2}}\right)^{\sigma_1}\left(2^{\frac{\sigma_1}{2}} - 1\right).
        \label{eq: Cantor dust principal}
    \end{equation}

    \begin{figure}[bt]
        \centering
        \includegraphics[width= 9cm]{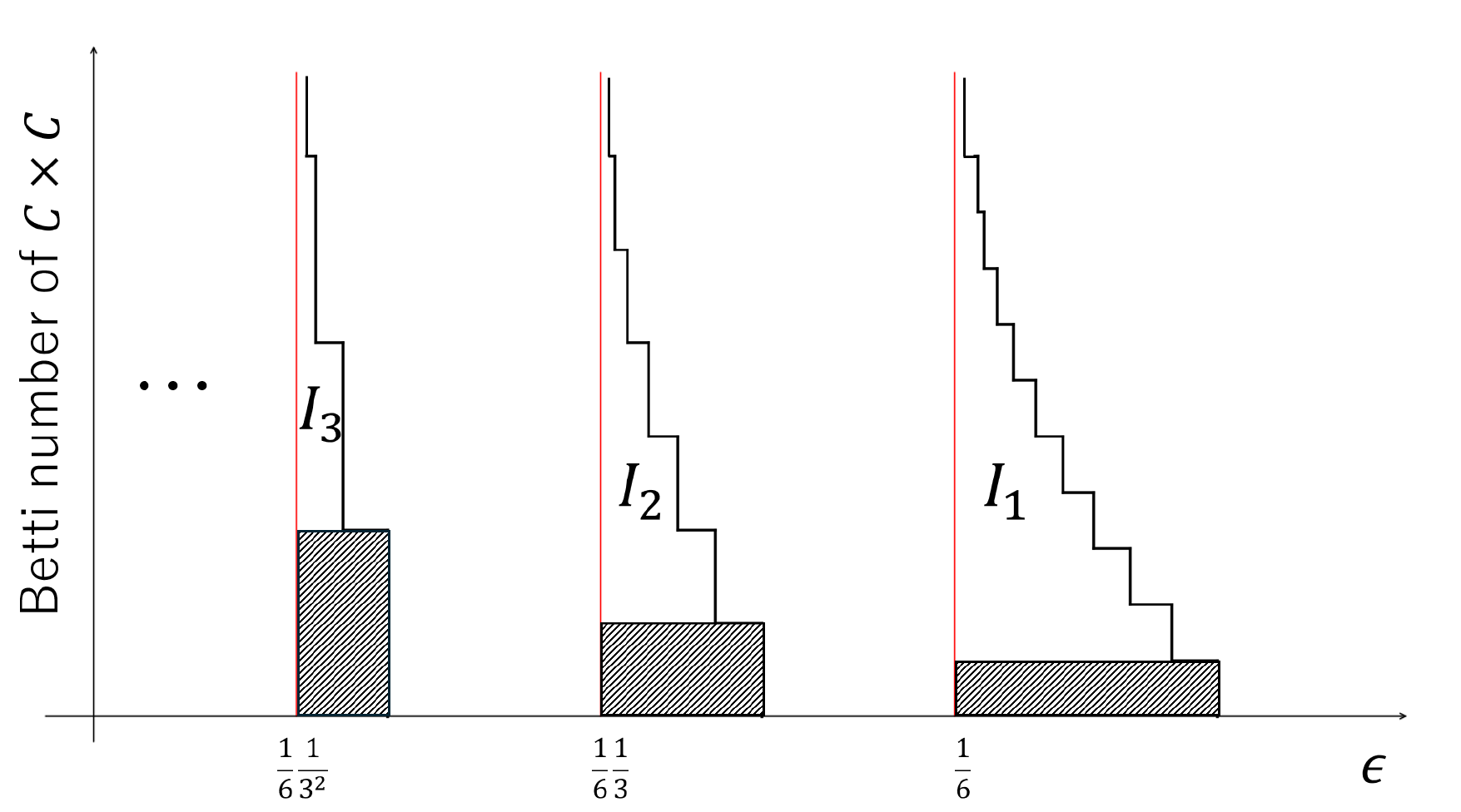}
        \caption{The $1$st Betti curve of $C \times C$}
        \label{fig: cantor dust Betti curve}
    \end{figure}

    \indent
    Next, consider the barcodes of form (ii). 
    Let $I_j$ be integration of the function $\sum_{e} \epsilon^{\sigma_1 - 1}\jeden_e$ where $e$ runs over all barcodes of the form (ii) at the $j$-step, as illustrated in Figure \ref{fig: cantor dust Betti curve}. 
    It is computed as
    \begin{equation*}
        I_j = \frac{1}{\sigma_1}\sum_{i=1}^{\infty} \left(\frac{1}{6\sqrt{2}}\right)^{\sigma_1}\left( \frac{1}{3}\right)^{\sigma_1(j-1)} \allowbreak \left\{\left(1+ \frac{1}{3^{2i}}\right)^{\frac{\sigma_1}{2}}-1\right\}4\cdot 4^{j-1} \cdot 2^{i-1}.
    \end{equation*}
    Since $\left(\frac{1}{3}\right)^{\sigma_1} = \frac{1}{4}$, every $I_j$ is the same amount ($=I_1$). 
    Furthermore, since $0< \frac{\sigma_1}{2} <1$, we see
    \begin{align*}
      I_1 &= \frac{1}{\sigma_1}\sum_{i=1}^{\infty} \left(\frac{1}{6\sqrt{2}}\right)^{\sigma_1}\left\{\left(1+\frac{1}{3^{2i}}\right)^{\frac{\sigma_1}{2}} - 1\right\}4 \cdot2^{i-1}\\
        &< \frac{1}{\sigma_1}\sum_{i=1}^{\infty} \left(\frac{1}{6\sqrt{2}}\right)^{\sigma_1} \left\{ 1 + \left(\frac{1}{3^{2i}}\right)^\frac{\sigma_1}{2} - 1\right\}4 \cdot2^{i-1}
       = \frac{2}{\sigma_1}\left(\frac{1}{6\sqrt{2}}\right)^{\sigma_1}.
    \end{align*}
    Set $a_j = l_1 \left(\frac{1}{3}\right)^{j-1}$. 
    By definition, $S_{a_{j+1}}^{(1),P_2}$ is the sum up to $\left(\frac{1}{3}\right)^{j-1}$-scaled copies $e$ of the form (ii) with $|e| > a_{j+1}$. 
    Thus, it is bounded from above as 
    \begin{equation*}
      S_{a_{j+1}}^{(1),P_2} \leq I_1 + \dots + I_j = jI_1. 
    \end{equation*}
    \indent
    We estimate $S_{a_{j+1}}^{(1),P_2}$ from below. 
    Since
    $
        \frac{1}{9}\left(\sqrt{1+\frac{1}{3^{2j}}}-1\right) < \sqrt{1+\frac{1}{3^{2(j+1)}}}-1 \ (j \in \N), 
    $ 
    we see
    \begin{equation*}
        a_{j+1} < a_j = l_1 \left(\frac{1}{3}\right)^{j-1} < l_2\left(\frac{1}{3}\right)^{j-3} < l_3 \left(\frac{1}{3}\right)^{j-5} < \dots.
    \end{equation*} 
    Fix $j$ and choose $i$ ($= 0, 1, \dots, \left\lfloor \frac{j}{2} \right\rfloor$). 
    For $1 \leq k \leq j -2i$, every $\left(\frac{1}{3}\right)^{k-1}$-scaled copy of a barcode $e$ with $|e| = l_i$ contributes to $S_{a_{j+1}}^{(1),P_2}$. 
    In fact, in the case of $i=2$, we can see that $\left(\frac{1}{3}\right)^{k-1}$-scaled copies ($k=1,2,\dots, j-4$) contribute to $S_{a_{j+1}}^{(1),P_2}$ by the inequality $a_{j+1} < l_3\left(\frac{1}{3}\right)^{j-5}$. 
    Let $J_i$ be the contribution to $S_{a_{j+1}}^{(1),P_2}$ from all scaled copies of barcodes with the lifetime $l_i$. 
    Then a cancellation happens and we get 
    \begin{equation*}
        J_i = \frac{j-2i}{\sigma} \left(\frac{1}{6\sqrt{2}}\right)^{\sigma_1} \left\{\left(1+\frac{1}{3^{2i}}\right)^{\frac{\sigma_1}{2}}-1\right\}4\cdot 2^{i-1}.
    \end{equation*}
    Thus, $S_{a_{j+1}}^{(1),P_2}$ can be bounded from below as follows:
    \begin{equation*}
        \sum_{i=1}^{\lfloor\frac{j}{2} \rfloor} J_i = \sum_{i=1}^{\lfloor\frac{j}{2} \rfloor} \frac{j-2i}{\sigma_1} \left(\frac{1}{6\sqrt{2}}\right)^{\sigma_1} \left\{\left(1+\frac{1}{3^{2i}}\right)^{\frac{\sigma_1}{2}}-1\right\}4\cdot 2^{i-1}
        \leq S_{a_{j+1}}^{(1),P_2}.
    \end{equation*}

    Consequently, we obtain the following inequality:
    \begin{equation*}
        \sum_{i=1}^{\lfloor\frac{j}{2} \rfloor} J_i \leq S_{a_{j+1}}^{(1),P_2} \leq {j I_1}.
    \end{equation*}
    If $j$ is even, we have
    \begin{align*}
        jI_1 - \sum_{i=1}^{\lfloor\frac{j}{2} \rfloor} J_i 
        &= \frac{1}{\sigma_1} 2\left|\sum_{l=1}^{\frac{j}{2}}\left\{ \sum_{i=l+1}^{\infty} \left(\frac{1}{6\sqrt{2}}\right)^{\sigma_1} \left\{\left(1+\frac{1}{3^{2i}}\right)^{\frac{\sigma_1}{2}} - 1\right\}4 \cdot 2^{i-1} \right\}\right| \\
        &< \frac{1}{\sigma_1} 2 \left(\frac{1}{6\sqrt{2}}\right)^{\sigma_1} \sum_{l=1}^{\frac{j}{2}} \sum_{i=l+1}^{\infty} \left(\frac{1}{2}\right)^{i-1}\notag\\
        &< \frac{4}{\sigma_1} \left(\frac{1}{6\sqrt{2}}\right)^{\sigma_1}.
    \end{align*}
    For odd $j$, it is similar. 
    Thus,
    \begin{align*}
        \frac{jI_1 - \sum_{i=1}^{\lfloor\frac{j}{2} \rfloor} J_i}{\left|\log a_j\right|}
        < \frac{4 \left(\frac{1}{6\sqrt{2}}\right)^{\sigma_1}}{\sigma_1 \left| (j-1) \log 3 - \log l_1\right|}
        \rightarrow 0 \quad (j \rightarrow \infty).
    \end{align*}
    By applying Lemma 1 with $a_j = l_1 \left(\frac{1}{3}\right)^{j-1}$ ($j\in \N$), we have
    \begin{equation}
        \lim_{\delta \searrow 0} \frac{S_\delta^{(1), P_2}}{\left|\log \delta\right|} = \frac{I_1}{\log 3}
        \label{eq: Cantor dust dust}
    \end{equation}
    Thus, by (\ref{eq: Cantor dust principal}) and (\ref{eq: Cantor dust dust}), we have
    \begin{equation*}
        \beta_1^{\rm phf} (C \times C) = \frac{1}{\log 4}\left\{\left(\frac{1}{6\sqrt{2}}\right)^{\frac{\log 4}{\log 3}}\left(2^{\frac{\log 2}{\log 3}} - 1\right) + \sigma_1 I_1\right\} = 0.0438\dots.
    \end{equation*}
    Thus, $\chi_a^{\rm phf}(C \times C) = \beta_0^{\rm phf}(C \times C) - \beta_1^{\rm phf} (C \times C)=0.1018\dots$.

\subsection{Menger sponge}
    The Menger sponge $M$ is a fractal known as a 3-dimensional extension of the Sierpinski carpet, and is constructed as shown in Figure \ref{fig: Menger sponge const.}. 
    Since the Menger sponge is path-connected, the $0$th PH-complexity and the $0$th average $ph$-fractal Betti number of $M$ are $0$. \\
    \begin{figure}[hbt]
        \centering
        \includegraphics[width=\textwidth]{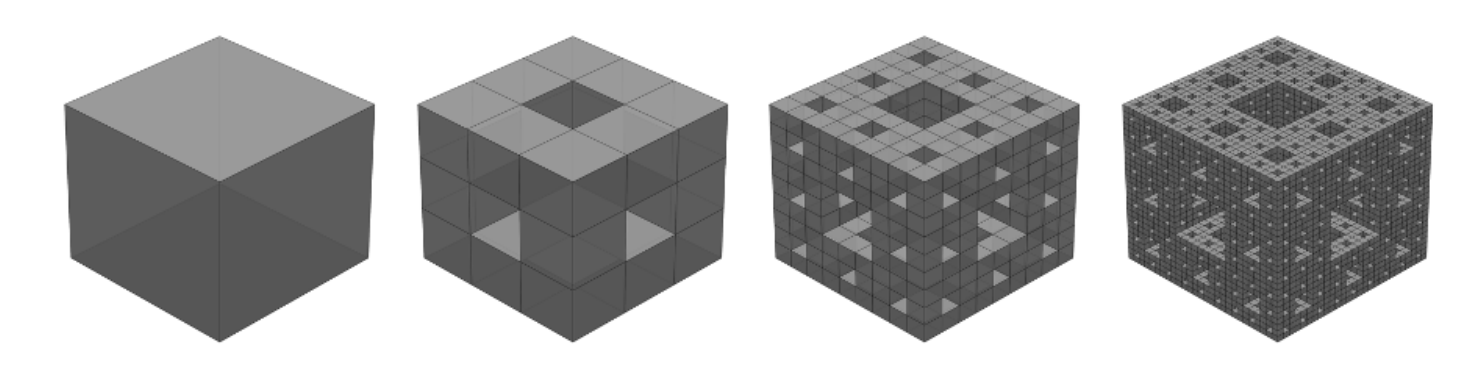}
        \caption{Menger sponge $M$}
        \label{fig: Menger sponge const.}
    \end{figure}
    \indent
    First of all, we consider the $2$nd average $ph$-fractal Betti number of $M$. 
    As for the case of $2$nd homology of $M$, `principal holes' and `little barcode dust' appear like as in the case of $1$st homology for $C\times C$. 
    The `principal holes' are indicated by Figure \ref{fig: Menger sponge kernel}(i) and its iterations, and the `little barcode dust' is Figure \ref{fig: Menger sponge kernel}(ii) and its iterations. 
    Precisely, those are represented by: 
    \begin{itemize}
      \item[(i)] $20^{j-1}$ barcodes of the form $\left( \frac{1}{6}\left(\frac{1}{3}\right)^{j-1}, \frac{\sqrt{2}}{6}\left(\frac{1}{3}\right)^{j-1}\right)$ ($j \in \N$), and
      \item[(ii)] $6 \cdot 20^{j-1}\cdot 2^{i-1}$ barcodes of the form$\left( \frac{1}{6}\left(\frac{1}{3}\right)^{j-1}, \frac{1}{6}\sqrt{1 + \frac{1}{3^{2i}}}\left(\frac{1}{3}\right)^{j-1}\right)$ ($i,j \in \N$).
    \end{itemize}

    \indent
    From this information, the $2$nd PH-complexity of $M$ is given as $\sigma_2 = \frac{\log 20}{\log 3}$. 
    This is because, as in the case of $C \times C$, the growth rate controlled by $i$ in the little barcode dust does not affect the overall growth rate.\\
    \begin{figure}[hbt]
        \centering
        \includegraphics[width=0.9\textwidth]{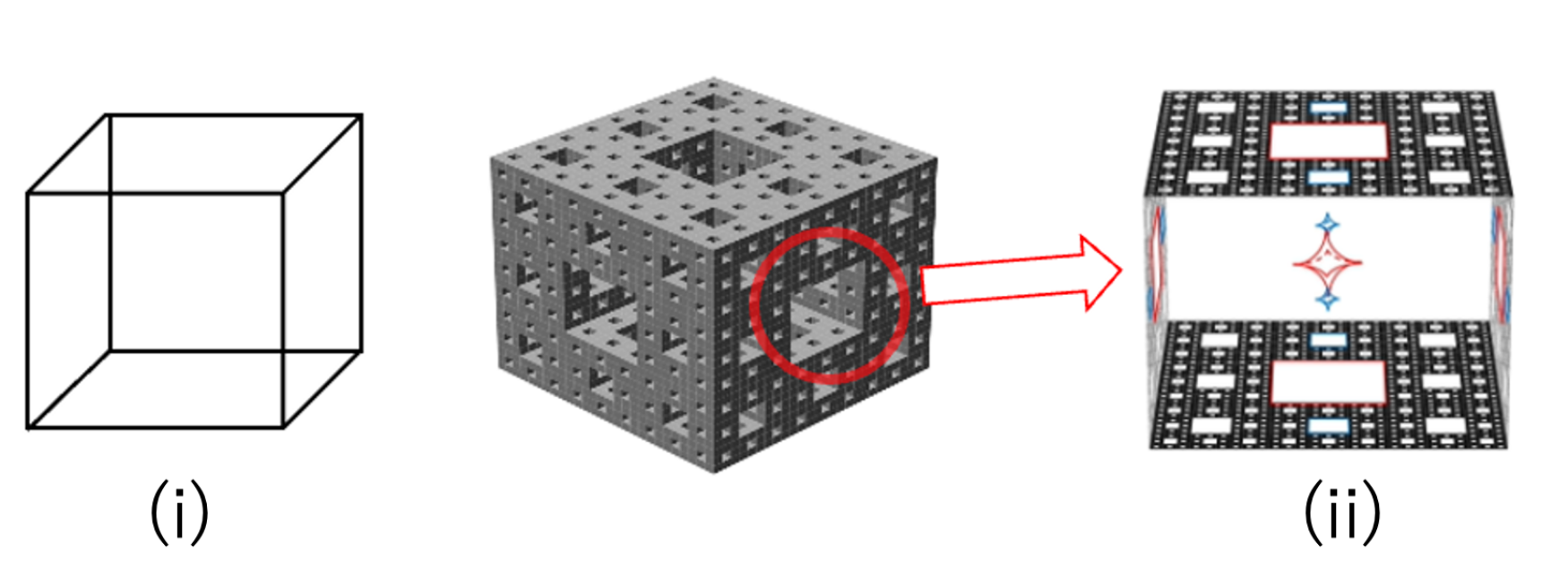}
        \caption{}
        \label{fig: Menger sponge kernel}
    \end{figure}
    \indent
    Similarly as in the case of $C \times C$, the Betti curve of the latter is as shown in Figure \ref{fig: cantor dust Betti curve}, and a similar calculation shows that $I_j$ is constant independent of $j$. 
    Since $\alpha \coloneqq \frac{\sigma_2}{2} = 1.36\dots$, we see that $1+2x -(1+x)^\alpha > 0$ for $0<x<1$. 
    Therefore, $I_1$ is finite, since
    \begin{align*}
        I_1 &= \frac{1}{\sigma_2}\sum_{j=1}^\infty \left\{\left(\frac{1}{6\sqrt{3}}\right)^{\sigma_2}\left(1+\frac{1}{3^{2j}}\right)^{\frac{\sigma_2}{2}} - \left(\frac{1}{6\sqrt{3}}\right)^{\sigma_2}\right\} 6 \cdot2^{j-1}\\
        &< \frac{1}{\sigma_2}\sum_{j=1}^\infty \left(\frac{1}{6\sqrt{3}}\right)^{\sigma_2} \left\{1+\frac{2}{3^{2j}} - 1\right\}6\cdot2^{j-1}
        = \left(\frac{1}{6\sqrt{3}}\right)^{\sigma_2}\frac{12}{7\sigma_2} \qquad (\because \frac{\log 20}{2\log 3} < 2).
    \end{align*}
    By performing a calculation similar to the $1$st average $ph$-fractal Betti number of $C \times C$, we obtain the $2$nd average $ph$-fractal Betti number for $M$:
    \begin{equation*}
        \beta_2^{\rm phf} (M) = \frac{1}{\log 20}\left\{ \left(\frac{1}{6\sqrt{3}}\right)^{\frac{\log 20}{\log 3}}\left(2^{\frac{\log 20}{2 \log 3}} - 1\right) + \sigma_2 I_1 \right\} =  0.001555\dots.
    \end{equation*}
    \indent
    Next, we consider the $1$st average $ph$-fractal Betti number of $M$. 
    $PH_1 (M)$ consists of the barcodes of the form $\left( 0 , \frac{1}{6} \left(\frac{1}{3}\right)^{j-1} \right)$ ($j \in \mathbb{N}$). 
    For $j=1$, the number of barcodes of the form $\left(0,\frac{1}{6}\right)$ is $5$. 
    At the $j$-step, there are $20^{j-1}$ copies of $1$-dimensional cycle in $M$, but some of them are mutually homologous, so we have to reduce the number of those redundant $\left(\frac{1}{3}\right)^{j-1}$-scaled copies. 
    In fact, the number of $1$st Betti number of $M$ is as shown in Table \ref{tab: Betti number of M}.
    \begin{table}[htbp]
        \centering
        \caption{$1$st Betti number of $M$}
        \label{tab: Betti number of M}
        \begin{tabular}{|l|c|c|c|r|}
            $\epsilon$ & $\frac{\sqrt{2}}{6}\frac{1}{3}$ & $\frac{\sqrt{2}}{6}\left(\frac{1}{3}\right)^2$ & $\frac{\sqrt{2}}{6}\left(\frac{1}{3}\right)^3$ &$\frac{\sqrt{2}}{6}\left(\frac{1}{3}\right)^4$\\
            \hline
            $H_1(M_\epsilon)$ & 5 & 76 & 1328 & 25024
        \end{tabular}
    \end{table}

    \begin{figure}[hbt]
        \centering
        \includegraphics[width=\textwidth]{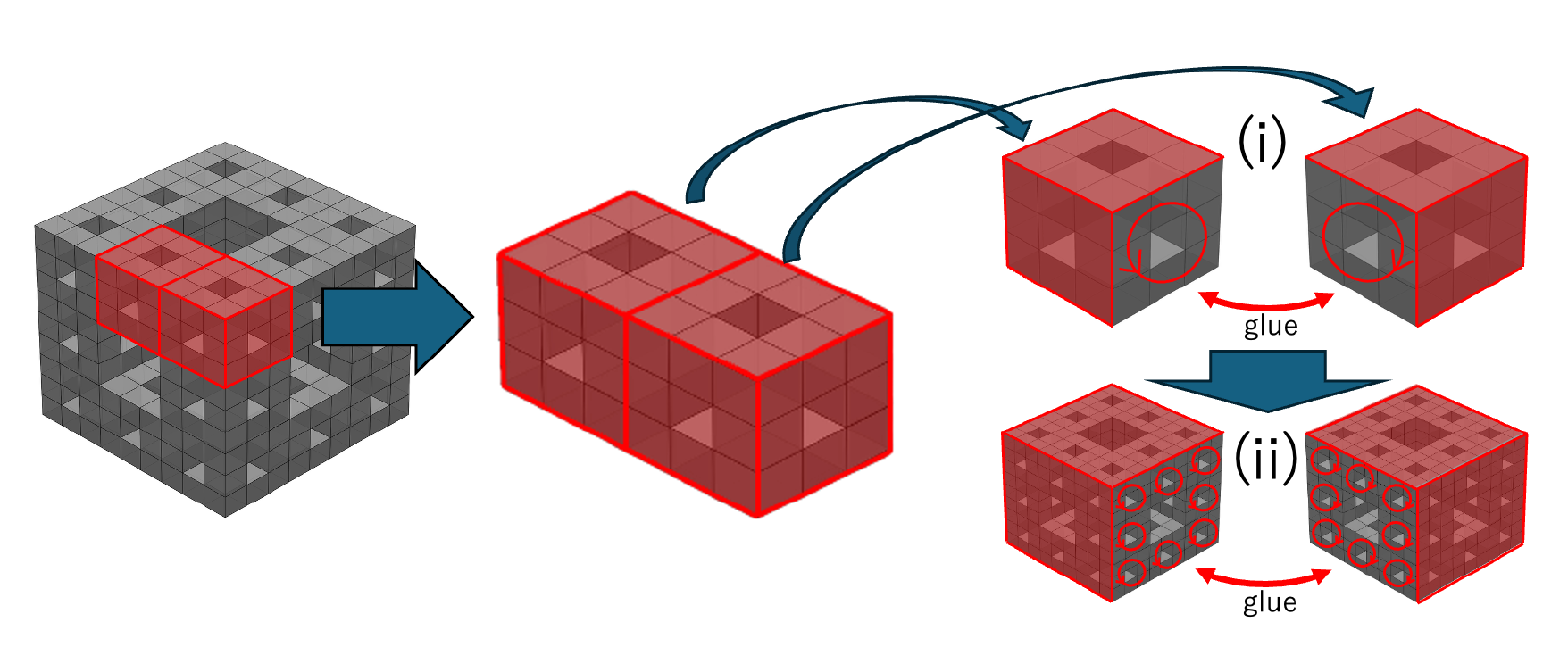}
        \caption{}
        \label{fig: reduction of barcodes}
    \end{figure}
    Let $A_j$ be the number the barcodes of the form $\left( 0 , \frac{1}{6} \left(\frac{1}{3}\right)^{j-1} \right)$, and let $B_j$ be the number of reduced barcodes, that is, $B_j = 5 \cdot 20^{j-1} - A_j$. 
    Within a single cube, there are $24$ faces where $\frac{1}{9}$-cubes (i.e. the length of each edge is $\frac{1}{9}$) touch each other, as shown in Figure \ref{fig: reduction of barcodes}(i). 
    Upon further subdivision, apart from those generated inside the $\frac{1}{9}$-cubes, $8$ forms like \ref{fig: reduction of barcodes}(ii) appear at the parts where the cubes are already in contact.
    Then, we obtain the recurrence relation
    \begin{align*}
        B_1 &= 0, \\
        B_j &= 24\cdot 20^{j-2} + 8 \cdot B_{j-1} \quad (j\geq 2). 
    \end{align*}
    By solving this recurrence relation, we obtain $B_j = -2^{3j-2} + 2 \cdot 20^{j-1}$ and $A_j = 3 \cdot 20^{j-1} + 2^{3j-2}$. 
    From this information, the $1$st PH-complexity of $M$ is given as $\sigma_1 = \frac{\log 20}{\log 3}$. 
    Then, by Lemma \ref{lem: the existence of average ph fractal Betti number}, the $1$st average $ph$-fractal Betti number of $M$ can be calculated as follows:
    \begin{equation*}
        \beta_1^{\rm phf} (M) = \frac{3}{\log 20}\left(\frac{1}{6\sqrt{3}}\right)^{\frac{\log 20}{\log 3}} = 0.001691\dots.
    \end{equation*}
    Thus, $\chi_a^{\rm phf} (M) = \beta_0^{\rm phf} (M) -\beta_1^{\rm phf}(M) + \beta_2^{\rm phf}(M) = -0.0001353\dots$.

\section{Further discussion}
In the present paper, we have modified Llorente-Winter's average fractal Euler number to a broader class of fractals by applying the perspective of persistent homology. \\
\indent
This definition is indebted by the PH-complexity introduced by MacPherson-Schweinhart \cite{MS12}. 
As another quantity to characterize fractals by using persistent homology, Schweinhart\cite{S21} also introduced a better notion of fractal dimension by sampling finite points of $X$; 
\begin{equation*}
    \dim_{PH}^i (X) = \inf \left\{ \alpha \colon \exists \  C >0 \ s.t. \sum_{e\in PH_i (\mathbf{X})} |e|^\alpha < C \ \forall \text{ finite } \mathbf{x} \subset X\right\}. 
\end{equation*}
This kind of definition using finite sampling can also be seen for the {\em magnitude} of compact metric spaces \cite{LM17}. 
Therefore, there should be an alternative better approach to defining fractal Betti numbers by taking a supremum of the same kind of summand as $S_\delta^{(i)}$ but running over finite sampling points from $X$. 
Moreover, Winter and Z\"{a}hle \cite{W06, WZ13} defined not only the (average) fractal Euler number but also curvature currents, that may adapt this framework. 
We will discuss this approach somewhere else.

\end{document}